\documentclass[12pt]{article}

\usepackage{lineno,hyperref}
\modulolinenumbers[5]

\usepackage{amsmath, amsthm, amssymb}
\usepackage[ansinew]{inputenc}
\usepackage{amsfonts}
\usepackage{amssymb}
\usepackage{enumitem}
\usepackage{amsthm}
\usepackage{dsfont}
\makeatother

\topmargin -1cm
\theoremstyle{plain}
\oddsidemargin-.3cm
\textheight 8.5in
\textwidth 6.6in
\DeclareMathAlphabet{\mathbbmsl}{U}{bbm}{m}{sl}
\def\diag{\mathop{\rm diag}}

\def\sol{\mathop{\rm SOL}}

\def\A{{\mathcal{A}}}
\def\I{{\mathcal I}}

\def\R{\mathbb{R}^{n\times n}}
\def\Rr{\mathbb{R}^{n}}

\def\S{{\sol{(\mathcal{A},\B,{\bf q}})}}
\def\So{{\sol{(\mathcal{A},\B,{\bf 0}})}}
\def\B{{\mathcal{B}}}

\def\sgn {{\rm sgn}}

\def\x{{\bf{x}}}
\def\xm{{\bf{x}}^{m-1}}
\def\ym{{\bf{y}}^{m-1}}
\def\y{{\bf{y}}}

\def\q{{\bf{q}}}
\def\u{{\bf{u}}}
\def\v{{\bf{v}}}
\def\z{{\bf{z}}}
\def\r{\mathbb{R}^n}

\def\A{\mathcal{A}}

\newtheorem{theorem}{Theorem}
\newtheorem{prop}{Proposition}
\newtheorem{corollary}{Corollary}
\newtheorem{example}{Example}

\newtheorem{lemma}{Lemma}
\newtheorem{definition}{Definition}

\bibliographystyle{plain}
\begin{document}
	
	\begin{center}\large{\bf The Horizontal Tensor Complementarity Problem}
	\end{center}
	\begin{center}

		\textsc{Punit Kumar Yadav$^{a}$, Sonali Sharma$^{b}$, K. Palpandi$^{c}$}\\
		$^{a,b}$Department of Mathematics, Malaviya National Institute of Technology Jaipur, India.\\
		$^{c}$Department of Mathematics, National Institute of Technology Calicut, India.\\
		E-mail address: punitjrf@gmail.com\\
	\end{center}


\begin{abstract}
	This article explores a new type of nonlinear complementarity problem, namely the horizontal tensor complementarity problem (HTCP), which is a natural extension of the horizontal linear complementarity problem studied in \cite{szn}. We extend the concepts of ${\bf R_0}$, ${\bf R}$, and ${\bf P}$ pairs from a pair of linear transformations given in \cite{wcp} to a pair of tensors. When a given pair of tensors has these properties, we use degree-theoretic tools to discuss the existence and boundedness of solutions to the HTCP. Finally, we study an uniqueness result of the solution of the HTCP
\end{abstract}
	\noindent{\it Keywords}:
The Horizontal Tensor Complementarity problem; {${\bf R_0}$} tensor pair; {\bf P} tensor pair; Degree theory.\\
{\it MSC classification}: 90C33; 90C30; 15A69

\section{Introduction} Given $A,B\in\R$ and $\q\in\Rr$, the horizontal linear complementarity problem HLCP($A,B,\q$) is to find $(\x,\y)\in\Rr\times\Rr$ such that \begin{equation}\label{eq1}
	\begin{aligned}
		\x\wedge \y&={\bf 0},\\
		A\x -B\y&=\q,\\
	\end{aligned}
\end{equation} where $'\wedge'$ is a pointwise minimum map. The horizontal linear complementarity problem (HLCP) is one of the generalizations of the linear complementarity problem (LCP). The significance of this problem is due to a result by Eaves and Lemke, which states that any piecewise linear system can be expressed as the HLCP, as shown in \cite{lem}. Various numerical methods have been provided to solve the HLCP, as seen in \cite{hlcp,homo}. The HLCP has been used for several years to study interior point algorithms for linear and convex programming, as demonstrated in \cite{gul,kuhn,zhn}. Zhang \cite{zhn} studied the convergence of infeasible interior-point methods for the HLCP. Guler \cite{gul} and Zhang \cite{zhn} also indicated that the HLCP formulation is more useful than the LCP formulation for computing a convex quadratic problem.

Boundedness, existence, and uniqueness results for the HLCP were provided by Sznajder \cite{szn}. He established that the HLCP solution is bounded when the given matrix pair is an ${R_0}$ pair. The matrix pair $\{A,B\}$ is an ${R_0}$ pair if HLCP($A,B,{\bf 0}$) has only the trivial solution. Given that the degree of an $R_0$ pair is nonzero, the HLCP's solution exists. The ${P}$ pair of matrices gives the existence and uniqueness of the solution of HLCP. The matrix pair $\{A,B\}$ is said to be a ${P}$ pair if and only if $A^{-1}B$ is a $P$ matrix. Similar types of outcomes for the HLCP's solution structure on a Euclidean Jordan algebra have been provided by Gowda et al. \cite{wcp}.

In Equation (\ref{eq1}), the HLCP contains the standard LCP as a special case where $A=I$. The LCP has been extensively studied by several authors for several decades, and its numerous applications are well known. To explore the structure of solution sets for the LCP, various classes of matrices have been developed by different authors, for instance, refer to \cite{PP0, fvi}. The class of $R_0$, $Q$, and $P$ matrices is essential to the existence and uniqueness of the LCP solution. More information on these types of matrices can be found in a standard reference by Cottle and Pang \cite{LCP}.

Among other generalizations of LCP, the tensor complementarity problem (TCP) is well known. For $\q\in\Rr$, the TCP($\A,\q$) is to find a vector $\x$ such that $$\x\in \Rr, ~\y=\A\xm + \q\in\Rr ~\text{and}~ \x\wedge \y = {\bf 0},$$ where $\A$ is an $m$-th order and $n$ dimensional tensor given as 
\begin{equation*}\hspace{3.3cm}\label{tensor}
	{\bf \A}=(a_{i_{1}i_{2}...i_{m}}), ~{\text{where}}~i_{j}\in [n]~ \text{for all}~ j \in [m] \text{~and~} a_{i_{1}i_{2}...i_{m}}\in \mathbb{R}.
\end{equation*} 
For $m=2$, the TCP transforms into the LCP. The TCP is a specific class of nonlinear complementarity problems \cite{pang}. The outcomes of nonlinear complementarity theory are applicable to TCP theory. The monograph by Harker and Pang \cite{pang} on the nonlinear complementarity problem is a standard reference. Due to its remarkable structure, TCP has numerous practical applications in several mathematical domains, including economics, game theory, and mathematical engineering. The tensor complementarity problem was introduced by Song \cite{Song}. After that, the TCP was examined swiftly, and several conclusions describing the solution structure of the TCP were discovered by several authors. The notions of ${P}, {R_0}, {R}$, and ${Q}$ matrices have been extended to ${P}, {R_0}, {R}$, and ${Q}$ tensors, see \cite{PTN,t1,t3,t2,stt}. Tensor complementarity problems have been increasingly studied in the last decade (for example, see \cite{GUST,PSDT,kp,ks,tspt}). Also, various generalizations of the TCP have been studied by several authors, (for example, see \cite{vtcp,ptcp,hcp}). The vertical tensor complementarity problem (VTCP) \cite{vtcp} is one of many generalizations of the TCP. The vertical LCP and HLCP have been studied simultaneously in LCP theory. Movtivated by this one can ask whether the HLCP can be generalized to the HTCP. Is there a connection between HTCP and TCP?

The crux of our paper is the extension of the HLCP to the HTCP. We first extend the definition of the HLCP to the HTCP. We define the ${\bf R_0}$, ${\bf R}$, and ${\bf P}$ pair for a pair of tensors and then study the boundedness of the solution set of the HTCP. Using degree theory and ${\bf P}$ tensor pair, we demonstrate the existence of a solution to the HTCP by stating and proving several results.

Here is an outline of our paper: In the next section, we discuss terminology and previous findings. The HTCP is defined in Section 3. Additionally, this section addresses the ${\bf R_0}$ tensor pair and the HTCP degree. The ${\bf R}$ tensor pair is introduced in subsection 3.2 along with an existence result. Lastly, the ${\bf P}$ tensor pair is defined and studied, along with the main existence result. Also, we study an uniqueness result of the solution of the HTCP.
\section{Preliminaries}
Throughout the paper, we use the following notations: \begin{itemize}
	\item[\rm(i)] We denote  $\mathbb{R}^n$ as the $n$ dimensional Euclidean space with the real usual inner product.  We say $\x \geq {\bf 0}$ if $\x\in\Rr$ is a component wise  nonnegative vector.
	\item  [\rm(ii)] $x,y,\alpha,..$ denote real scalars and $\x,\y,\q$,... denote vectors.
	\item[\rm(iii)] $\x*\y$ is the Hadamard product and $\x\wedge\y$ is pointwise minimum map i.e., $\x\wedge\y=\text{min}\{\x,\y\}.$
	\item[\rm(iv)]For $\x=(x_1,x_2,...,x_n)^t\in\Rr$, we write $\x^{[m-1]}=(x_1^{m-1},...,x^{m-1}_n).$
	\item [\rm(v)]  The set of all $n\times n$ real matrices will be denoted by $\R$ and  $A,B,..$ are used to denote real matrices.
	\item [\rm(vi)]  The set of
	all $m$-th order $n$ dimensional real tensors is denoted as $T(m, n)$. The set of all even order real  tensors is denoted as $E(m,n).$ We write tensor as math calligraphic letters such as $\A,\B,..,$ and ${\mathcal I}$ denotes the identity tensor. 
	\item[\rm(vii)]${\rm det}(\A)$ denotes the determinant for a  tensor $\A\in T(m,n)$.
	\item [\rm(vii)] We use $[n]$ to denote the set $\{1,2,...,n\}$.
\end{itemize} 
\vspace*{.5cm}
We now recall some definitions and results from the LCP and TCP theory, which will be used frequently in our paper.

\begin{prop}[\cite{wcp}]\label{star}	For all $\x,\y,\u\in\mathbb{R}^n,$ the followings hold.	\begin{itemize}
		\item [\rm(I)] $\lambda(\x\wedge\y)=\lambda\x\wedge\lambda\y$, for 
		$\lambda\geq 0.$ 
		\item[\rm(II)]${\bf u}+\x\wedge \y=({\u}+\x)\wedge ({\bf u}+\y)$.
		\item[\rm(III)]  The followings are equivalent. 
		\begin{itemize}  		
			\item [\rm(i)]  $\x\wedge \y={\bf 0}.$
			\item [\rm(ii)] $\x,\y\geq {\bf 0}$ and $~\x*\y={\bf 0}$.	\item[\rm(iii)] $\x,\y\geq {\bf 0}~\text{and}~\langle \x,\y\rangle={ 0}.$
		\end{itemize}
\end{itemize}	\end{prop}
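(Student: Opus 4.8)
The plan is to verify each item by reducing it to an elementary scalar fact about the minimum of two real numbers, applied coordinatewise. For part (I), fix $\lambda \geq 0$ and a coordinate $i \in [n]$; since multiplication by a nonnegative scalar preserves the order on $\rr$, one has $\min\{\lambda x_i, \lambda y_i\} = \lambda \min\{x_i, y_i\}$, and assembling over all $i$ gives $\lambda(\x \wedge \y) = \lambda\x \wedge \lambda\y$. For part (II), I would invoke the translation invariance of $\min$: for each $i$, $\min\{u_i + x_i,\, u_i + y_i\} = u_i + \min\{x_i, y_i\}$ directly from the definition of minimum, and assembling over $i$ yields $\u + \x \wedge \y = (\u + \x) \wedge (\u + \y)$.

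For part (III) I would argue the cycle (i) $\Rightarrow$ (ii) $\Rightarrow$ (iii) $\Rightarrow$ (i). Assuming (i), we have $\min\{x_i, y_i\} = 0$ for every $i$, so both $x_i \geq 0$ and $y_i \geq 0$, and at least one of $x_i, y_i$ equals $0$; hence $x_i y_i = 0$, giving $\x, \y \geq {\bf 0}$ and $\x * \y = {\bf 0}$, which is (ii). Assuming (ii), summing the coordinatewise identities $x_i y_i = 0$ gives $\langle \x, \y \rangle = \sum_{i=1}^{n} x_i y_i = 0$ with $\x, \y \geq {\bf 0}$, which is (iii). Finally, assuming (iii), since $\x, \y \geq {\bf 0}$ each summand $x_i y_i$ in $\langle \x, \y \rangle = 0$ is nonnegative, so the vanishing of the sum forces $x_i y_i = 0$ for every $i$; together with $x_i, y_i \geq 0$ this means the smaller of $x_i, y_i$ is $0$, i.e. $\min\{x_i, y_i\} = 0$ for all $i$, which is (i).

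I do not expect any genuine obstacle here: the statement is a bundle of standard coordinatewise observations. The one place to be slightly careful is the implication (iii) $\Rightarrow$ (i), where the nonnegativity hypothesis $\x, \y \geq {\bf 0}$ is essential — it is exactly what lets a vanishing sum of the products $x_i y_i$ force each product to vanish, and the implication fails without it. Since the result is stated verbatim as coming from \cite{wcp}, one could simply cite it; but the short self-contained argument above costs little and keeps the preliminaries readable.
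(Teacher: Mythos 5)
Your coordinatewise verification is correct in every part, including the careful use of nonnegativity in the implication (iii) $\Rightarrow$ (i). The paper itself gives no proof of this proposition (it is simply recalled from \cite{wcp}), and your elementary argument is exactly the standard one that the cited reference relies on, so there is nothing to compare beyond noting that your write-up fills in what the paper leaves implicit.
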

\begin{definition}[\cite{Song},\cite{stt}]\rm
	Let $\A\in T(m,n)$. We say $\A$ is a/an
	\begin{itemize}
		\item [\rm(i)] $R_0$ tensor if $[\x\wedge \A\xm={\bf 0}\implies \x={\bf 0}].$
		\item [\rm(ii)] $P$ tensor if $[\x*\A\xm\leq {\bf 0}\implies\x={\bf 0}].$
		\item[\rm(iii)] $R$ tensor if it is an ${R_{0}}$ tensor and  $\mathrm{TCP}({\A},{\bf e})$ has a unique  solution, where ${\bf e}=(1,1,...,1)^t$.
	\end{itemize}
\end{definition} 
\begin{definition}[\cite{gev},\cite{ev}]\rm Let $\A,\B\in T(m,n).$ Then $\lambda\in{\mathbb{R}}$ is called
	\begin{itemize}
		\item [\rm(i)]an H-eigenvalue of $\A$ if there exists $\x\in\Rr\setminus\{\bf 0\}$ such that $\A\xm=\lambda\x^{[m-1]}.$ 
		\item [\rm(ii)]a Z-eigenvalue of $\A$ if there exists $\x\in \Rr\setminus\{\bf 0\}$ such that $\A\xm=\lambda\x$ and $ \x^t \x=1.$ 
		\item [\rm(iii)] a $\B$-eigenvalue of $\A$  if there exists $\x\in\Rr\setminus\{\bf 0\}$ such that   $$\A\x^{m-1}-\lambda\B\x^{m-1}={\bf 0}.$$
	\end{itemize}
\end{definition}
\begin{definition}[\cite{tpro}]\label{tp}\rm	For $\A\in T(m,n),m\geq 2$ and $\B\in T(k,n)$, the tensor product ${\mathcal C}=\A\B\in T((m-1)(k-1)+1,n)$ and is defined as $${c}_{i\alpha_1\alpha_2...\alpha_{m-1}}=\sum_{i_2,...,i_m=1}^n a_{ii_{2}...i_{m}}b_{{{i_2}}{\alpha_1}}...b_{{i_m}{\alpha_{m-1}}},$$ where $i\in[n],~\alpha_1,...,\alpha_{m-1}\in[n]^{k-1}$. This product has the following properties: \begin{itemize}
		\item [\rm(i)] $A(\A_1+\A_2)=A\A_1+A\A_2$, where $A\in\R$ and $\A_1,\A_2\in T(m,n).$
		\item [\rm(ii)] $\A {I}={{I} \A}=\A.$ Here ${I}\in \R$ is the identity matrix.
		\item [\rm(iii)] $\A(\B{\mathcal{E}})=(\A\B){\mathcal E},$ where $\A\in T(m,n),m\geq 2$, $\B\in T(k,n)$ and ${\mathcal E}\in T(l,n).$ 
	\end{itemize}
\end{definition}
\begin{theorem}[\cite{tinv}]\label{minv}
	Let $\A\in T(m,n).$ Then $\A$ has an order $2$ left inverse if and only if there exists a nonsingular $n\times n$ matrix $M$ such that $M\A ={\mathcal I}$. Also, $M$ is the unique  order $2$ left inverse of $\A.$
\end{theorem}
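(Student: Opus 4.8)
The plan is to read ``$\A$ has an order~$2$ left inverse'' as ``there is a matrix $M\in\R$ (that is, an order~$2$ tensor) with $M\A=\I$'', where $\I\in T(m,n)$ is the identity tensor, characterised by $\I\xm=\x^{[m-1]}$ for all $\x\in\Rr$ and having entries $\I_{i i_2\cdots i_m}=1$ when $i=i_2=\cdots=i_m$ and $0$ otherwise (the product $M\A$ indeed lies in $T(m,n)$ by Definition~\ref{tp}). With this reading the ``if'' direction is immediate, so the theorem reduces to showing (a) that every $M\in\R$ with $M\A=\I$ is nonsingular, which is the ``only if'' implication, and (b) that such an $M$ is unique. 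First I would record the one identity used throughout: by associativity of the tensor product, namely Definition~\ref{tp}(iii) applied to the order~$2$ tensor $M$, the tensor $\A$, and the vector $\x$, one has $(M\A)\xm=M(\A\xm)$, so $M\A=\I$ forces
\begin{equation*}
M(\A\xm)=(M\A)\xm=\I\xm=\x^{[m-1]}\qquad\text{for every }\x\in\Rr .
\end{equation*}

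For (a) I would argue by contradiction: if $M$ is singular, choose $\w\neq{\bf 0}$ with $\w^{t}M={\bf 0}$, i.e. $\sum_{i}w_{i}m_{ij}=0$ for every $j\in[n]$, and contract the identity $M\A=\I$ along its first mode against $\w$. The left-hand side becomes $\sum_{j}\big(\sum_{i}w_{i}m_{ij}\big)a_{j i_2\cdots i_m}=0$ for every multi-index $(i_2,\dots,i_m)$, whereas evaluating the right-hand side at $i_2=\cdots=i_m=k$ leaves exactly $w_{k}$; hence $w_{k}=0$ for all $k\in[n]$, contradicting $\w\neq{\bf 0}$. Therefore $M$ is nonsingular.

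For (b), suppose $M_{1}\A=M_{2}\A=\I$ with $M_{1},M_{2}\in\R$; both are nonsingular by (a). Evaluating the displayed identity at a standard basis vector $\x=e_{i}$ and using $e_{i}^{[m-1]}=e_{i}$ gives $M_{1}(\A e_{i}^{m-1})=e_{i}$, hence $\A e_{i}^{m-1}=M_{1}^{-1}e_{i}$; the same identity for $M_{2}$ then yields $M_{2}M_{1}^{-1}e_{i}=M_{2}(\A e_{i}^{m-1})=e_{i}$ for every $i\in[n]$. Thus $M_{2}M_{1}^{-1}=I$ and $M_{1}=M_{2}$.

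The mathematical content is light, and the step needing the most care is the bookkeeping for the tensor product and the identity tensor: one must check that Definition~\ref{tp}(iii) applies in exactly the form $(M\A)\xm=M(\A\xm)$ and that the first-mode contraction of $M\A=\I$ against a left null vector of $M$ behaves as claimed. Equivalently, the argument can be cast in terms of the mode-$1$ unfolding $\widehat{A}$ of $\A$, where $M\A=\I$ becomes the matrix equation $M\widehat{A}=\widehat{I}$ with $\widehat{I}$ of full row rank $n$, after which nonsingularity and uniqueness of $M$ are standard rank arguments; if the cited source defines ``order~$2$ left inverse'' through a different tensor product, reconciling it with Definition~\ref{tp} would be the only delicate point.
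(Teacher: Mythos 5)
The paper offers no proof of this statement: it is imported verbatim from \cite{tinv} as a known result, so there is nothing internal to compare against. Your argument is correct and self-contained under the standard reading (also the one the paper itself uses, e.g.\ in Proposition~\ref{MB}) that an order~$2$ left inverse is a matrix $M\in\R$ with $M\A=\I$: contracting $M\A=\I$ in the first mode against a left null vector $\w$ of $M$ and specializing to $i_2=\cdots=i_m=k$ does yield $w_k=0$ for every $k$, giving nonsingularity, and evaluating $M(\A\x^{m-1})=\x^{[m-1]}$ at the standard basis vectors gives $\A e_i^{m-1}=M^{-1}e_i$ and hence uniqueness. Both steps rest only on the associativity $(M\A)\x^{m-1}=M(\A\x^{m-1})$ from Definition~\ref{tp}, which you correctly verify, so the proposal is a valid proof of the cited theorem.
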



\subsection{Degree theory} In our paper, we rely on degree theory results to demonstrate the existence of solutions for the HTCP. We present some properties of degree theory for our discussion. For further details on degree theory, we refer the readers to \cite{ztn, fvi, deg}.

Let ${\rm\Psi}:\bar{\Delta}\rightarrow \Rr$ be a continuous function, where $\Delta\subseteq\Rr$ is an open bounded set and let ${\bf p}\notin {\rm\Psi}(\partial\Delta)$ be a vector. Here, $\partial\Delta$ and $\bar{\Delta}$ denote the boundary and closure of $\Delta$ respectively. The degree of ${\rm\Psi}$ with respect to ${\bf p}$ over $\Delta$ is denoted by $\deg({\rm\Psi},\Delta,{\bf p})$. The equation ${\rm\Psi}(\x)={\bf p}$ has a solution if $\deg({\rm\Psi},\Delta,{\bf p})\neq 0$. The degree is an integer. If ${\rm\Psi}(\x)={\bf p}$ has a unique solution, say $\y$ in $\Delta$, then the degree is the same over any bounded open sets in $\Delta$ containing the unique solution $\y$. The common degree is called the local degree of ${\rm\Psi}$ and is denoted by $\text{deg}({\rm\Psi},\y)$. If ${\rm\Psi}$ is differentiable with nonsingular derivative at a point $\y$, then (\cite{deg}, page 869)
$$\text{deg}({\rm\Psi},{\y}) = \text{sgn }{\rm det}({\rm\Psi}'({\y})).$$
It is notable that if ${\rm\Psi}$ is a continuous function on $\Rr$ such that ${\rm\Psi}(\x)={\bf 0}$ if and only if $\x={\bf 0}$, then
$\text{deg}({\rm\Psi},{\bf 0}) = \deg({\rm\Psi},\Delta,{\bf 0})$ for
any bounded open set $\Delta$ containing ${\bf 0}$ in $\Rr$.
\subsubsection{Properties of the degree} We list out the following properties for later use.
\begin{itemize}
	\item[(D1)] deg($I,\Delta,\cdot)=1$, where $I$ is the identity function.
	\item [(D2)]     {Homotopy invariance}: Let  $F(\x,t):\Rr\times[0,1]\rightarrow \Rr $  be continuous function and $Z:=\{\x:F(\x,t)={0}~\text{for some}~t\in[0,1]\}$ is the zero set of $F(\x,t).$
	If $Z$ is bounded, then for any bounded open set $\Delta$ in $\Rr$ containing  $Z$, we have $$\text{deg}(F(\cdot,0),\Delta,{ \bf 0})=\text{deg}(F(\cdot,1),\Delta,{\bf 0}).$$
	\item[(D3)] Cartesian product property: Let ${\rm\Psi}:\bar\Delta\rightarrow \Rr$ and ${\rm\Phi}:\bar\Delta'\rightarrow \Rr$  be  two continuous functions, where $\Delta$ and $\Delta'$ are open, bounded and nonempty subsets of $\Rr$ with ${\bf 0}\notin {\rm\Psi}({\partial\Delta})$ and ${\bf 0}\notin {\rm\Phi}({\partial\Delta'})$. Then  the degree of the mapping ${\rm\Psi}\times {\rm\Phi}$ is well defined and given by $$ \deg({\rm\Psi}\times {\rm\Phi},\Delta\times\Delta',({\bf 0},{\bf 0}))  =\deg({\rm\Psi},\Delta,{\bf 0})\deg({\rm\Phi},\Delta',{\bf 0}).$$
\end{itemize}
Now we state the Borsuk's theorem  from (\cite{nfa}, page 78) which will be used later.
\begin{theorem}\label{odd}
	Let $D$ be a symmetric bounded open set in $\Rr$ containing zero and ${\rm\Psi}:D\rightarrow\Rr$ be an odd mapping $({\rm\Psi}(-\x)=-{\rm\Psi}(\x)\text{~for all ~}\x\in D)$ such that ${\bf 0}\notin {{\rm\Psi}}(\partial D)$. Then $\deg({\rm\Psi},D,{\bf 0})$ is an odd number. 
\end{theorem}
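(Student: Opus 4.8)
The plan is to reduce the claim to the case of a smooth odd map having $\mathbf{0}$ as a regular value, and then to count the preimages of $\mathbf{0}$ in symmetric pairs. So the first task is to produce, from the given continuous odd $\Psi$ on $\overline D$ with $\mathbf{0}\notin\Psi(\partial D)$, a smooth map $g\colon\overline D\to\Rr$ that is still odd, is uniformly as close to $\Psi$ on $\overline D$ as we wish, has $\mathbf{0}$ as a regular value on $D$, and moreover has $g'(\mathbf{0})$ nonsingular. Granting such a $g$, the straight-line homotopy $(1-t)\Psi+tg$ keeps its zero set inside a fixed compact subset of $D$ (since $|\Psi|\ge\rho:=\mathrm{dist}(\mathbf{0},\Psi(\partial D))$ on $\partial D$ while $\|g-\Psi\|_{\infty}<\rho$), so homotopy invariance (D2) gives $\deg(\Psi,D,\mathbf{0})=\deg(g,D,\mathbf{0})$; and since $\mathbf{0}$ is a regular value of $g$ whose preimage in $D$ is discrete and bounded away from $\partial D$, hence finite, the regular-value formula recalled before Theorem~\ref{odd} together with additivity of the degree gives $\deg(g,D,\mathbf{0})=\sum_{\x\in g^{-1}(\mathbf{0})}\sgn\det g'(\x)$.

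The counting is then immediate: oddness of $g$ makes $g^{-1}(\mathbf{0})$ symmetric, and $g(\mathbf{0})=\mathbf{0}$, so the nonzero preimages split into disjoint pairs $\{\x,-\x\}$; since the derivative of an odd map is even, $g'(-\x)=g'(\x)$, so each pair contributes the even integer $2\,\sgn\det g'(\x)$, while $\x=\mathbf{0}$ contributes $\sgn\det g'(\mathbf{0})=\pm1$. Hence $\deg(\Psi,D,\mathbf{0})$ is odd.

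The real content is the construction of $g$, and the obvious difficulty is that a generic Sard-type shift of $\Psi$ destroys oddness, so the perturbation must respect the symmetry. I would proceed in three steps. First mollify (or apply the Weierstrass theorem on the compact set $\overline D$) to get a smooth $p$ with $\|p-\Psi\|_{\infty}$ tiny, then \emph{symmetrize}: $q(\x):=\tfrac12\big(p(\x)-p(-\x)\big)$ is smooth, odd, and still close to $\Psi$ because $\Psi$ is odd and $\overline D$ is symmetric. Second, pin down the origin: pick a nonsingular $\widetilde A$ close to $q'(\mathbf{0})$ and a smooth radial cutoff $\beta$ equal to $1$ near $\mathbf{0}$ and to $0$ outside a small ball $B_{r_0}\subseteq D$, and set $g_0(\x):=q(\x)+\beta(|\x|)\big(\widetilde A\x-q(\x)\big)$; this is smooth and odd, equals $\x\mapsto\widetilde A\x$ on $B_{r_0/2}$, and differs from $q$ by at most $\sup_{B_{r_0}}|\widetilde A\x-q(\x)|$, which is small. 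Third, make $\mathbf{0}$ a regular value away from the origin by a symmetric parametrized perturbation: on $\big(D\setminus\overline{B}_{r_0/2}\big)\times(\Rr)^{n}$ consider $F(\x,y^{1},\dots,y^{n})=g_0(\x)-\sum_{i=1}^{n}x_i y^{i}$; since $\partial F/\partial y^{i}=-x_i I$ and every such $\x$ has some $x_i\neq0$, $F$ has $\mathbf{0}$ as a regular value, so by the parametric transversality theorem (Sard applied to a parametrized family) for almost every small $(y^{1},\dots,y^{n})$ the map $g(\x):=g_0(\x)-\sum_i x_i y^{i}$ has $\mathbf{0}$ as a regular value on $D\setminus\overline{B}_{r_0/2}$. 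Each term $x_i y^{i}$ is odd, so $g$ is odd; on $B_{r_0/2}$ we have $g(\x)=\big(\widetilde A-[\,y^{1}\,|\cdots|\,y^{n}\,]\big)\x$, whose only zero is $\mathbf{0}$ and whose derivative there is nonsingular for small $y$'s; and keeping every $|y^{i}|$ small keeps $g$ close to $\Psi$. Thus every preimage of $\mathbf{0}$ under $g$ in $D$ is regular, as required above.

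The main obstacle is exactly this last ingredient -- moving $\Psi$ into general position \emph{without leaving the class of odd maps}. The parametrized family $g_0(\x)-\sum_i x_i y^{i}$ is the device that achieves it cleanly; the classical alternative perturbs one coordinate direction at a time, $g_i:=g_{i-1}-x_i y^{i}$ with $y^{i}$ a small regular value of $\x\mapsto x_i^{-1}g_{i-1}(\x)$ on $\{x_i\neq0\}$, but that route needs a delicate bookkeeping argument so that later perturbations do not spoil the regularity already secured. The remaining ingredients -- density of nonsingular matrices, Sard's theorem, evenness of the derivative of an odd map, and stability of the degree under small perturbations -- are routine; since the paper only uses Borsuk's theorem as a tool, citing the standard statement from \cite{nfa} in fact suffices.
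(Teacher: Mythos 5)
Your proposal is a correct and complete proof, but note that the paper itself does not prove this statement: it is Borsuk's theorem, quoted verbatim from (\cite{nfa}, page 78) as a known tool, so there is no in-paper argument to compare against. What you have written is essentially the classical proof: approximate $\Psi$ by a smooth map, restore oddness by antisymmetrization $q(\x)=\tfrac12(p(\x)-p(-\x))$ (valid precisely because $\Psi$ is odd and $D$ is symmetric), force $g$ to be a nonsingular linear map near the origin via an even cutoff, and achieve regularity of $\mathbf 0$ away from the origin with the odd parametrized perturbation $g_0(\x)-\sum_i x_iy^i$ together with parametric transversality. The straight-line homotopy estimate on $\partial D$, the finiteness of $g^{-1}(\mathbf 0)$ (discrete and compactly contained in $D$), the identity $g'(-\x)=g'(\x)$ for odd $g$, and the resulting count $\sgn\det g'(\mathbf 0)+2k$ are all handled correctly. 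You correctly identify the one genuinely delicate point (perturbing to general position without leaving the class of odd maps) and resolve it with a standard device; the only cosmetic remark is that the theorem as stated gives $\Psi$ on $D$, whereas the degree and your argument implicitly require continuity on $\bar D$, a harmless imprecision inherited from the paper's statement.
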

\noindent{\bf Note}: All the degree theoretic results and concepts are also applicable over any finite dimensional Hilbert space (like $\Rr$ or $\Rr\times\Rr$ etc).
\section{The horizontal tensor complementarity problem} 

Motivated by equation (\ref{eq1}), we first define the horizontal tensor complementarity problem (HTCP).

\begin{definition}\rm
	Given $\A,\B\in T(m,n)$ and $\q\in\r$, the HTCP$(\A,\B,\q)$ is to find a pair of vectors $(\x,\y)\in \r\times \r$ such that
	\begin{equation}
		\begin{aligned}
			\x\wedge \y&={\bf 0},\\
			\A\xm -\B\ym&=\q.\\
		\end{aligned}
	\end{equation}
\end{definition}
Here, $\text{SOL}(\A,\B,\q)$ denotes the solution set of HTCP($\A,\B,\q$) and is defined as the set of all such $(\x,\y)$.\\

It is noted that HTCP is turned into HLCP for $m=2$. Also, when $m$ is even and $\A={\mathcal{I}}$, HTCP becomes TCP. Next, we define the ${\bf R_0}$, ${\bf R}$, ${\bf P}$ tensor pair in the following subsections.
\subsection{${\bf R_0}$ tensor pair} 
This section defines ${\bf R_0}$ tensor pair and HTCP degree. 
The $R_0$ pair of matrices $\{A,B\}$ is given by 	\begin{equation*}
	[\x\wedge \y={\bf 0},
	A\x -B\y={\bf 0}]\implies (\x,\y)=({\bf 0},{\bf 0}).\end{equation*}
The degree of HLCP with the help of the $R_0$ pair has been defined and discussed in \cite{szn}. When $A=I$ and $B$ is an $R_0$ matrix, the degree of HLCP and degree of LCP are  same. In TCP($\B,\q$), if  $\B$ is an $R_0$ tensor then ${\rm\psi}(\x)=\x\wedge\B\xm={\bf 0}\iff\x={\bf 0}$. So the degree of ${\rm\psi}$ is well defined for any bounded open set $\Delta$ in  $\Rr$  containing zero vector. This common degree is called TCP($\B,\q$)-degree and is denoted by $\deg(\B)$ i.e., $\deg(\B)=\deg({\rm\psi},{\bf 0}).$ Now we define the ${\bf R_{0}}$ tensor pair and its HTCP-degree.
\begin{definition}\rm
	Let $\A,\B\in T(m,n)$. We say $\{\A,\B\}$ is an ${\bf R_{0}}$ tensor pair, if 
	\begin{equation*}
		\begin{aligned}
			[\x\wedge \y={\bf 0},~
			\A\xm -\B\ym={\bf 0}]\implies (\x,\y)=({\bf 0},{\bf 0}).\\
		\end{aligned}
	\end{equation*}Equivalently HTCP$(\A,\B,{\bf 0})$ 
	has  only the  trivial  solution. 
\end{definition}
It is noted that if $\{\A,\B\}$ is an ${\bf R_0}$ tensor  pair, then $${\rm\Psi}(\z)={\bf 0}\iff \z={\bf 0},$$ where \begin{equation}\label{eq3}
	{\rm\Psi}(\z):=\begin{bmatrix}
		\x\wedge\y \\ \A\xm-\B\ym
	\end{bmatrix},~\z=(\x,\y)\in\Rr\times\Rr.\end{equation} Therefore $\text{deg}({\rm\Psi},{\bf 0})$ is well defined. We define the HTCP-degree of tensor pair $\{\A,\B\}$ as $$\deg(\A,\B):=\text{deg}({\rm\Psi},{\bf 0}).$$

Now we give a boundedness result of $\S$ with the help of the ${\bf R_{0}}$ tensor pair. This result extends the boundedness results of the solution set of HLCP \cite{szn}.
\begin{theorem}\label{bdd}
	Let $\A,\B\in T(m,n)$. Then, the following statements are equivalent.
	\begin{itemize}
		\item[\rm(i)] $\{\A,\B\}$ is an ${\bf R_{0}}$ tensor pair.
		\item[\rm(ii)] $\S$ is a compact set for any $\q\in \r$.
	\end{itemize}  
\end{theorem}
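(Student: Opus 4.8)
The plan is to prove the two implications separately, using only continuity and homogeneity of the maps involved (no degree theory is needed for this statement). First I would record the part that holds unconditionally: $\S$ is always closed, since it is the preimage of the single point $({\bf 0},\q)$ under the continuous map $(\x,\y)\mapsto(\x\wedge\y,\ \A\xm-\B\ym)$ (continuity of $\wedge$ is elementary, and $\A\xm$, $\B\ym$ are polynomial in their arguments). Hence compactness of $\S$ is equivalent to boundedness of $\S$, and the theorem reduces to relating boundedness of $\S$ for every $\q$ with the ${\bf R_0}$ property.

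For (i)$\Rightarrow$(ii), fix $\q$ and suppose, for contradiction, that $\S$ is unbounded. Then there is a sequence $(\x^k,\y^k)\in\S$ with $t_k:=\|(\x^k,\y^k)\|\to\infty$; passing to a subsequence, the normalized pairs $(\x^k/t_k,\y^k/t_k)$ converge to some $(\bar\x,\bar\y)$ on the unit sphere. By Proposition \ref{star}(I), $(\x^k/t_k)\wedge(\y^k/t_k)=t_k^{-1}(\x^k\wedge\y^k)={\bf 0}$, so in the limit $\bar\x\wedge\bar\y={\bf 0}$. Dividing $\A(\x^k)^{m-1}-\B(\y^k)^{m-1}=\q$ by $t_k^{m-1}$ and using that $\x\mapsto\A\xm$ is homogeneous of degree $m-1$, we get $\A(\x^k/t_k)^{m-1}-\B(\y^k/t_k)^{m-1}=\q/t_k^{m-1}\to{\bf 0}$, hence $\A\bar\x^{m-1}-\B\bar\y^{m-1}={\bf 0}$. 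Thus $(\bar\x,\bar\y)\ne({\bf 0},{\bf 0})$ solves HTCP$(\A,\B,{\bf 0})$, contradicting that $\{\A,\B\}$ is an ${\bf R_0}$ tensor pair. So $\S$ is bounded, hence compact.

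For (ii)$\Rightarrow$(i), it suffices to use the case $\q={\bf 0}$. If $\{\A,\B\}$ were not an ${\bf R_0}$ tensor pair, there would be a nonzero $(\x,\y)\in\sol(\A,\B,{\bf 0})$. Then for every $\lambda\ge 0$, Proposition \ref{star}(I) gives $(\lambda\x)\wedge(\lambda\y)=\lambda(\x\wedge\y)={\bf 0}$, and homogeneity gives $\A(\lambda\x)^{m-1}-\B(\lambda\y)^{m-1}=\lambda^{m-1}(\A\xm-\B\ym)={\bf 0}$, so the whole ray $\{\lambda(\x,\y):\lambda\ge 0\}$ lies in $\sol(\A,\B,{\bf 0})$, which is then unbounded — contradicting compactness. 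Hence $\sol(\A,\B,{\bf 0})=\{({\bf 0},{\bf 0})\}$, i.e.\ $\{\A,\B\}$ is an ${\bf R_0}$ tensor pair. The argument is essentially routine; the one point that deserves care is the normalization in (i)$\Rightarrow$(ii), where one must check that both the nonsmooth term $\x\wedge\y$ and the degree-$(m-1)$ terms $\A\xm,\B\ym$ rescale compatibly so that the limiting pair is an honest solution of the \emph{homogeneous} HTCP rather than merely a cluster point without complementarity structure — which is exactly what Proposition \ref{star}(I) together with homogeneity of the tensor map provides.
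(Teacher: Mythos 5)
Your proposal is correct and follows essentially the same route as the paper's proof: closedness reduces the question to boundedness, the implication (i)$\Rightarrow$(ii) is the standard normalization argument producing a nonzero solution of the homogeneous problem, and (ii)$\Rightarrow$(i) uses the ray $\{\lambda(\x,\y):\lambda\ge 0\}$ to contradict boundedness. Your version is in fact slightly more careful than the paper's, since you explicitly pass to a convergent subsequence of the normalized pairs rather than assuming the limit exists.
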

\begin{proof}      
	(i)$\implies$(ii): The set $\S$ is always closed. Let  $\{\A,\B\}$ be an ${\bf R_{0}}$ tensor pair. If $\S$ is empty, then we are done. Let $\S$ be a nonempty set.  Now we prove the boundedness of $\S.$  On the contrary, assume $\S$ is unbounded for some $\q$. Let  there exist $\z_k:=(\x_k,\y_k)$ such that $\|\z_k\|\rightarrow \infty,$ as $k\rightarrow \infty,$ with 	\begin{equation*}
		\begin{aligned}
			\x_k\wedge \y_k&={\bf 0},\\
			\A\xm_k -\B\ym_k&=\q.\\
		\end{aligned}
	\end{equation*} Dividing by $\|\z_k\|$ and $\|\z_k\|^{m-1}$ respectively, 
	\begin{equation}\label{eqm}
		\begin{aligned}
			\dfrac{\x_k}{\|\z_k\|}\wedge \dfrac{\y_k}{\|\z_k\|}&={\bf 0},\\
			\A\big(\dfrac{\x_k}{\|\z_k\|}\big)^{m-1} -\B\big(\dfrac{\y_k}{\|\z_k\|}\big)^{m-1}&=\dfrac{\bf q}{\|\z_k\|^{m-1}}.\\
		\end{aligned}
	\end{equation} As $k\rightarrow \infty$, let $$\bar{{\bf z}}=(\bar{\x},\bar{\y})\in\Rr\times\Rr,~\text{where}~\bar{\x}=\text{lim}\dfrac{\x_k}{\|\z_k\|},~\bar{\y}=\text{lim}\dfrac{\y_k}{\|\z_k\|}.$$  
	Therefore in equation (\ref{eqm}), as $k\rightarrow \infty$, 
	\begin{equation*}
		\begin{aligned}
			{\bar \x}\wedge {\bar \y}&={\bf 0},\\
			\A\bar \x ^{m-1}-\B\bar \y ^{m-1}&={\bf 0}.\\
		\end{aligned}
	\end{equation*}
	We see that $\|\bar{\x}\|^2+\|\bar{\y}\|^2=1.$
	But $\z=(\x,\y)=({\bf 0},{\bf 0})$ as $\{\A,\B\}$ is an ${\bf R_{0}}$ tensor pair. Thus we have a contradiction as $(\bar{\x},\bar{\y})\neq({\bf 0}, {\bf 0}).$ Hence $\S$ is bounded for all $\q\in\Rr.$
	
	(ii)$\implies$(i): Let $\z=(\x,\y)\neq ({\bf 0},{\bf 0})$ with,
	\begin{equation*}
		\begin{aligned}
			{\x}\wedge { \y}&={\bf 0},\\
			\A\xm-\B\ym&={\bf 0}.\\
		\end{aligned}
	\end{equation*}
	Multiplying by $\mu$ and $\mu^{m-1}$ respectively, 
	\begin{equation*}
			\begin{aligned}
			{\mu\x}\wedge { \mu\y}&={\bf 0},\\
			\A(\mu\x)^{m-1}-\B(\mu\y)^{m-1}&={\bf 0},\\
		\end{aligned}	\end{equation*}
	where $\mu>{ 0}$. 
	For all $\mu>{ 0}$, we see  $\mu\z=(\mu\x,\mu\y)\neq ({\bf 0},{\bf 0})$ and $\mu \z\in\So$. It contradicts the boundedness of $\So.$ Hence we have our claim.
\end{proof}

\begin{theorem}\label{Rrr}
	Let $\A,\B\in T(m,n)$. If
	\begin{itemize}
		\item [\rm(i)] $\{\A,\B\}$ is an ${\bf R}_0$ tensor pair.
		\item [\rm(ii)] $\deg(\A,\B)$ is nonzero.
		
	\end{itemize}
	Then $\S$ is a nonempty compact set for all  $\q\in\Rr$.
\end{theorem}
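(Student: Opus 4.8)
Compactness is free: since $\{\A,\B\}$ is an ${\bf R_0}$ tensor pair, Theorem \ref{bdd} already gives that $\S$ is a compact set for every $\q\in\r$. So the only real content is that $\S$ is \emph{nonempty}, and the plan is to obtain this by a degree-theoretic homotopy argument in the spirit of the classical existence proofs for the HLCP and the TCP.

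Fix $\q\in\r$ and define the continuous homotopy $H:(\Rr\times\Rr)\times[0,1]\to\Rr\times\Rr$ by
\[
H(\z,t):=\begin{bmatrix}\x\wedge\y\\ \A\xm-\B\ym-t\q\end{bmatrix},\qquad \z=(\x,\y).
\]
Then $H(\cdot,0)$ is exactly the map ${\rm\Psi}$ from (\ref{eq3}) attached to the ${\bf R_0}$ pair, while a zero of $H(\cdot,1)$ is precisely a point of $\S$. The first step is to show that the zero set $Z=\{\z:H(\z,t)={\bf 0}\text{ for some }t\in[0,1]\}$ is bounded. I would argue exactly as in the implication (i)$\Rightarrow$(ii) of Theorem \ref{bdd}: supposing $Z$ unbounded, take $\z_k=(\x_k,\y_k)\in Z$ with $\|\z_k\|\to\infty$ and corresponding parameters $t_k\in[0,1]$, divide the two defining equations by $\|\z_k\|$ and by $\|\z_k\|^{m-1}$ respectively, and pass to a convergent subsequence (legitimate because $(t_k)$ lies in the compact set $[0,1]$, so $t_k\q/\|\z_k\|^{m-1}\to{\bf 0}$). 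This produces a limit $(\bar\x,\bar\y)$ with $\|\bar\x\|^2+\|\bar\y\|^2=1$ solving HTCP$(\A,\B,{\bf 0})$, contradicting the ${\bf R_0}$ property.

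Once $Z$ is known to be bounded, choose any bounded open set $\Delta\subseteq\Rr\times\Rr$ with $Z\subseteq\Delta$; then no zero of any $H(\cdot,t)$, and in particular no zero of $H(\cdot,0)$ or $H(\cdot,1)$, lies on $\partial\Delta$, so homotopy invariance (D2) applies and gives
\[
\deg(H(\cdot,0),\Delta,{\bf 0})=\deg(H(\cdot,1),\Delta,{\bf 0}).
\]
Since $\{\A,\B\}$ is an ${\bf R_0}$ pair, ${\rm\Psi}(\z)={\bf 0}$ iff $\z={\bf 0}$, so by the local-degree property recalled in Section 2.1 the left-hand side equals $\deg({\rm\Psi},{\bf 0})=\deg(\A,\B)$, which is nonzero by hypothesis (ii). Hence $\deg(H(\cdot,1),\Delta,{\bf 0})\neq 0$, so the equation $H(\z,1)={\bf 0}$ has a solution $\z\in\Delta$; this $\z$ lies in $\S$, proving $\S\neq\emptyset$. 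Combined with compactness from Theorem \ref{bdd}, this establishes the theorem.

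The main (indeed, essentially the only) obstacle is the boundedness of the homotopy zero set $Z$; everything after that is a routine application of the degree axioms and the definition of $\deg(\A,\B)$. That boundedness step is a near-verbatim adaptation of the normalization-and-limit argument already used for Theorem \ref{bdd}, the one new ingredient being that the parameters $t_k$ remain in $[0,1]$, which makes the perturbation term $t_k\q$ negligible after dividing by $\|\z_k\|^{m-1}$.
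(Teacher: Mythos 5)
Your proposal is correct and follows essentially the same route as the paper: the same homotopy $G(\z,t)=(\x\wedge\y,\ \A\xm-\B\ym-t\q)$, the same normalization argument (borrowed from Theorem \ref{bdd}) to bound the zero set uniformly in $t$, and the same application of homotopy invariance plus $\deg(\A,\B)\neq 0$ to conclude nonemptiness. Your explicit remark that $t_k\in[0,1]$ makes the term $t_k\q/\|\z_k\|^{m-1}$ vanish in the limit is a detail the paper leaves implicit, but the argument is identical in substance.
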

\begin{proof} Let  $\{\A,\B\}$ be an ${\bf R}_0$ tensor pair. Due to Theorem \ref{bdd}, it is enough to prove $\S$ is nonempty. Let us consider a homotopy function $G$ such that $$G({\z},t) = \left[\begin{array}{c}
		{\x}\wedge{\y}  \\
		{\A}{\x}^{m-1}-{\B}{\y}^{m-1}-t{\bf q}
	\end{array}
	\right],$${where}~${\z}=({\x},{\y}) \in {\Rr}\times {\Rr},t \in [0,1].$
	
	Then $G({\z},0)= \left[\begin{array}{c}
		{\x}\wedge{\y} \\
		{\A}{\x}^{m-1}-{\B}{\y}^{m-1}
	\end{array}
	\right]$ and $G({\z},1)= \left[\begin{array}{c}
		{\x}\wedge{\y}\\
		{\A}{\x}^{m-1}-{\B}{\y}^{m-1}-{\bf q}
	\end{array}
	\right]$.
	As $\{\A,\B\}$ is an ${\bf R_0}$ tensor pair, by the similar process in  theorem \ref{bdd}, $Z=\{{\bf z}:G({\bf z,t})={\bf 0}\text{ for some }t\in[0,1]\}$ is uniformly bounded. For an open bounded set $\Delta$ containing the set $Z$ in $\Rr\times\Rr$, by the homotopy invariance property of the degree, we have 
	\begin{equation}\label{Rpair}
		\deg({\A},{\B})= \deg(G(\cdot,0),\Delta,{\bf 0})=\deg(G(\cdot,1),\Delta,{\bf 0}).
	\end{equation}
	Due to the fact $\deg(\A,\B)$ is nonzero,$~\deg(G(\cdot,1),\Delta,{\bf 0})$ is nonzero. Thus $G(\cdot,1)={\bf 0}$ has solution in $\Delta$ implying $\S$ is nonempty. Thus $\S$ is both nonempty and compact all  $\q\in\Rr$.
\end{proof}

Now we establish a relation between HTCP-degree and TCP-degree.
\begin{prop}\label{trr} Let $\B\in T(m,n).$ If $\B$ is  an ${ R_{ 0}}$ tensor, then the followings hold.\begin{itemize}
		\item [\rm(i)] $\{\mathcal{I},\B\}$ is an ${\bf R_{\bf 0}}$  tensor pair.
		\item [\rm(ii)] If $m$ is even, then $\deg(\I,\B)=\deg(\B).$
	\end{itemize} 	
\end{prop}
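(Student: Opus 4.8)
The plan is to handle the two statements in order, since (i) is essentially definitional and (ii) is the substantive claim about equality of degrees.

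For part (i), I would simply unwind the definition of an ${\bf R_0}$ tensor pair for the specific pair $\{\mathcal{I},\B\}$. Suppose $(\x,\y)$ satisfies $\x\wedge\y={\bf 0}$ and $\mathcal{I}\x^{m-1}-\B\y^{m-1}={\bf 0}$. Using that $\mathcal{I}\x^{m-1}=\x^{[m-1]}$ and the equivalence in Proposition~\ref{star}(III), from $\x\wedge\y={\bf 0}$ we get $\x,\y\geq{\bf 0}$ and $\x*\y={\bf 0}$. The equation gives $\x^{[m-1]}=\B\y^{m-1}$, and since $\x\geq{\bf 0}$ this forces $\x=(\B\y^{m-1})^{[1/(m-1)]}$ componentwise when $m$ is odd, but more robustly: multiply out $\x*\y={\bf 0}$ componentwise and substitute. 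Actually the cleanest route is to observe that $\y\wedge\B\y^{m-1}={\bf 0}$: indeed $\y\geq{\bf 0}$, and $\B\y^{m-1}=\x^{[m-1]}\geq{\bf 0}$, and $\langle\y,\B\y^{m-1}\rangle=\sum_i y_i x_i^{m-1}=\sum_i (x_i y_i)\,x_i^{m-2}=0$ because $x_iy_i=0$ for every $i$. Since $\B$ is an $R_0$ tensor, $\y={\bf 0}$, whence $\x^{[m-1]}={\bf 0}$, i.e. $\x={\bf 0}$. So $\{\mathcal{I},\B\}$ is an ${\bf R_0}$ tensor pair, and by the remark following the definition, $\deg(\mathcal{I},\B)=\mathrm{deg}({\rm\Psi},{\bf 0})$ is well defined.

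For part (ii), with $m$ even, I would use the Cartesian product property (D3) together with homotopy invariance (D2). The map whose degree defines $\deg(\mathcal{I},\B)$ is ${\rm\Psi}(\z)=(\x\wedge\y,\ \x^{[m-1]}-\B\y^{m-1})$, while $\deg(\B)$ is the degree at ${\bf 0}$ of ${\rm\psi}(\x)=\x\wedge\B\x^{m-1}$. The key idea: set up a homotopy on $\Rr\times\Rr$ deforming ${\rm\Psi}$ to a map of product form, by first noting that on the zero set the second component forces $\x^{[m-1]}=\B\y^{m-1}$, and then building a homotopy $H(\z,t)=\big((1-t)(\x\wedge\y)+t(\text{something}),\ \x^{[m-1]}-\B\y^{m-1}\big)$ that collapses the first coordinate appropriately. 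More concretely, since $\x^{[m-1]}-\B\y^{m-1}={\bf 0}$ is equivalent (for $m$ even, using that $t\mapsto t^{m-1}$ is a bijection on $\Rr$ with positive derivative off the origin) to $\x = (\B\y^{m-1})^{[1/(m-1)]}$, one can substitute $\x$ out and reduce ${\rm\Psi}$ to a perturbation of $\big(\y\wedge\B\y^{m-1},\ \x^{[m-1]}-\B\y^{m-1}\big)$, which by (D3) has degree $\deg(\B)\cdot\mathrm{sgn}\det(\text{the }\x\mapsto\x^{[m-1]}\text{ part})$. The Jacobian of $\x\mapsto\x^{[m-1]}$ is $\mathrm{diag}((m-1)x_i^{m-2})$, which for $m$ even has nonnegative diagonal, and after the homotopy contracting $\y$ to a generic point one gets sign $+1$. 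Hence $\deg(\mathcal{I},\B)=\deg(\B)\cdot 1=\deg(\B)$.

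The main obstacle is making the substitution "$\x$ out" rigorous as a homotopy argument rather than a change of variables, and in particular verifying that the zero set of the homotopy stays bounded throughout (so (D2) applies) and that the degenerate Jacobian of $\x\mapsto\x^{[m-1]}$ at points with some $x_i=0$ does not cause trouble — one typically perturbs to $\x\mapsto\x^{[m-1]}+\varepsilon\x$ or uses the local-degree characterization at a regular value. I expect the cleanest writeup uses the homotopy $H(\z,t)=\big(\x\wedge\y,\ \x^{[m-1]}-(1-t)\B\y^{m-1}-t\x^{[m-1]}\big)$-style deformations in stages, invoking the ${\bf R_0}$ property at each stage to keep the zero set in a fixed bounded $\Delta$, and finishing with (D3) and (D1). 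The evenness of $m$ enters exactly to guarantee the relevant sign is $+1$ and that $t\mapsto t^{m-1}$ behaves monotonically.
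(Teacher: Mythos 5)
Your part (i) is correct and is essentially the paper's own argument: from $\x^{[m-1]}=\B\y^{m-1}$ and $\x\wedge\y={\bf 0}$ one deduces $\y\wedge\B\y^{m-1}={\bf 0}$ and invokes the $R_0$ property of $\B$. Nothing to add there.

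For part (ii) you have the right high-level plan — deform the HTCP map to a product map, apply (D3), and argue that the factor coming from $\x\mapsto\x^{[m-1]}$ has degree one — but the actual content of the proof is the construction of the intermediate homotopies, and that is exactly what you leave open (you call it ``the main obstacle''). Two concrete problems. First, the intermediate target you name, $\bigl(\y\wedge\B\y^{m-1},\ \x^{[m-1]}-\B\y^{m-1}\bigr)$, is \emph{not} a product map: its second coordinate still couples $\x$ and $\y$, so (D3) does not apply to it. You need a further deformation killing the $\B\y^{m-1}$ term in the second slot, and doing this naively destroys the $R_0$ control on the zero set. The paper resolves this with a single homotopy that moves \emph{both} coordinates simultaneously, namely $F(\z,t)=\bigl(\y\wedge(t\x^{[m-1]}+(1-t)\B\y^{m-1}),\ \x^{[m-1]}-t\B\y^{m-1}\bigr)$: at $t=1$ this is $(\y\wedge\x^{[m-1]},\ \x^{[m-1]}-\B\y^{m-1})$ and at $t=0$ it is the genuine product $(\y\wedge\B\y^{m-1},\ \x^{[m-1]})$, and for every $t$ the zero set is $\{{\bf 0}\}$ because $\B$ is $R_0$. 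A second homotopy $H(\z,t)=\bigl(t(\y\wedge\x)+(1-t)(\y\wedge\x^{[m-1]}),\ \x^{[m-1]}-\B\y^{m-1}\bigr)$ connects $F(\cdot,1)$ to the HTCP map; here evenness of $m$ enters to guarantee that $\y\wedge\x$ and $\y\wedge\x^{[m-1]}$ have the same componentwise signs, so the convex combination vanishes iff $\x\wedge\y={\bf 0}$, and the ${\bf R_0}$ pair property keeps the zero set trivial. Second, the one homotopy you do write down explicitly, of the form $\x^{[m-1]}-(1-t)\B\y^{m-1}-t\x^{[m-1]}=(1-t)(\x^{[m-1]}-\B\y^{m-1})$, degenerates to the zero map at $t=1$; its zero set is then all of $\Rr\times\Rr$, so (D2) cannot be applied. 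Your closing remark about the degree of $\x\mapsto\x^{[m-1]}$ being $+1$ is fine in substance (the paper cites the result for single-variable odd maps; oddness plus Borsuk, or the homotopy $t\x+(1-t)\x^{[m-1]}$, both work for $m$ even), but as written the proof of (ii) is a plan rather than an argument.
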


\begin{proof}      
	(i): Let $(\x,\y)\in\mathbb{R}^n \times \mathbb{R}^n$ such that 
	\begin{equation*}		
		\begin{aligned}
			\x\wedge \y&={\bf 0}, \\ \mathcal{I} \x^{m-1}-\B\y^{m-1}&={\bf 0}.\\
		\end{aligned}
	\end{equation*}
	This gives that $\x^{[m-1]}=\B\y^{m-1}.$ As $\x\wedge \y={\bf 0}$, we have  $\x^{[m-1]}\wedge \y={\bf 0}$ which implies that $\B\y^{m-1}\wedge \y={\bf 0}. $ Since $\B$  is an ${ R_{\bf 0}}$ tensor, $\y={\bf 0}.$ So $(\x,\y)=({\bf 0},{\bf 0}).$ Therefore $\{\mathcal{I},\B\}$ is an ${\bf R_{\bf 0}}$ tensor pair.
	
	(ii):  To prove item (ii), we consider the following maps:
	\begin{equation*}
		\begin{aligned}
			F(\z,t)&=\begin{bmatrix}
				\y\wedge(t\x^{[m-1]} +(1-t)\B\y^{m-1}) \\ \mathcal{I} \x^{m-1}-t\B\y^{m-1}
			\end{bmatrix},\\
			H(\z,t)&=\begin{bmatrix}t(\y\wedge\x)+(1-t)(\y\wedge\x^{[m-1]}) 
				\\ \mathcal{I} \x^{m-1}-\B\y^{m-1}
			\end{bmatrix},	\end{aligned}	
	\end{equation*}{where}~$\z=(\x,\y)\in\Rr\times\Rr~{\text{and}}~t\in[{0},1]$. We claim that the zeros of functions $F$ and $H$ are contained in some open bounded set $\Delta\subseteq\Rr\times\Rr$ when $t$ varies from $0$ to $1$. 
	
	To show this,	we proceed further. Regarding to the function $F$, for $t=0$ and $t=1,$ we have $$F(\z,0)=\begin{bmatrix}	\y\wedge\B\y^{m-1}\\ \mathcal{I} \x^{m-1}
	\end{bmatrix} \text{~and~} F(\z,1)=\begin{bmatrix}
		\y\wedge\x^{[m-1]} \\ \mathcal{I} \x^{m-1}-\B\y^{m-1}
	\end{bmatrix}.$$
	As $\B$ is an ${R}_0$ tensor, it is easy to see that $F(\z,t)={\bf 0}\iff \z={\bf 0}$  for any $t\in[0,1]$. For an arbitrary open bounded set $\Delta$ containing zero  in $\Rr\times\Rr$, by the homotopy invariance of degree, \begin{equation}\label{pr0}
		\deg\big(F(\cdot,{0}),\Delta,{\bf 0})=\deg(F(\cdot,1),\Delta,{\bf 0}).	\end{equation}
	As $m$ is even, we see $\y\wedge\x$ and $\y\wedge\x^{[m-1]}$   always has same sign. Thus for $t\in[0,1]$,  \begin{equation}\label{pr1}
		t(\y\wedge\x)+(1-t)(\y\wedge\x^{[m-1]})={\bf 0}\iff \x\wedge\y={\bf 0}.\end{equation}
	Due to the ${\bf R}_0$ pair property of $\{\mathcal{I},\B\}$ and from equation (\ref{pr1}), we get \begin{equation}\label{eq7}
		\{{\bf z}:H(\z,t)={\bf 0},\text{~for some~}t\in[0,1]\}=\{({\bf 0},{\bf 0})\}.
	\end{equation}
	As $({\bf 0},{\bf 0})\in\Delta$, thus by the homotopy invariance of degree and equation (\ref{eq7}), 
	\begin{equation}\label{eq8}
		\deg(H(\cdot,0),\Delta,{\bf 0})=\deg(H(\cdot,1),\Delta,{\bf 0}).\end{equation}
	As $F(\cdot,1)=H(\cdot,0),$ from equations (\ref{pr0}) and (\ref{eq8}), we get,
	$$\deg\big(F(\cdot,{0}),\Delta,{\bf 0})=\deg(F(\cdot,1),\Delta,{\bf 0})=\deg(H(\cdot,0),\Delta,{\bf 0})=\deg(H(\cdot,1),\Delta,{\bf 0}).$$ And $$\deg(\I,\B)=\deg(H(\cdot,1),\Delta,{\bf 0})=\deg(F(\cdot,0),\Delta,{\bf 0}).$$
	Now for two arbitrary open bounded sets $\Delta_1$ and $\Delta_2$ each containing ${\bf 0}$ in $\Rr$ letting $\Delta=\Delta_1\times\Delta_2$, by the Cartesian product (D3), we have $$\deg(\I,\B)=\deg(F(\cdot,0),\Delta,{\bf 0})=\deg(\B)\deg\big({\rm\Phi},\Delta_2,{\bf 0}).$$
	Here ${\rm\Phi}(\x)=\x^{[m-1]}.$ As ${\rm\Phi}$ is a single variable odd function, from theorem 5.1 in \cite{ztn},  $\deg\big({\rm\Phi},\Delta_2,{\bf 0}\big)$  will be one. Therefore, $$\deg(\I,\B)=\deg(\B).$$
	This completes the proof.
\end{proof}

Next, we have  direct consequences of the above proposition.
\begin{corollary}
	Let $\B$ be an even order ${ R}_0$ tensor. If $\deg(\B)\neq 0$, then $\deg(\mathcal{I},\B)\neq 0.$
\end{corollary}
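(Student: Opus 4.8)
The plan is to read this off directly from Proposition~\ref{trr}. Since $\B$ is assumed to be an even order $R_0$ tensor, both hypotheses needed for Proposition~\ref{trr}(ii) are in place: $\B$ is an $R_0$ tensor and its order $m$ is even. Hence Proposition~\ref{trr}(ii) applies and yields the identity $\deg(\I,\B)=\deg(\B)$. With this identity in hand the conclusion is immediate: assuming $\deg(\B)\neq 0$ and substituting into $\deg(\I,\B)=\deg(\B)$ gives $\deg(\I,\B)=\deg(\B)\neq 0$, which is exactly the assertion.

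Before writing the one-line argument I would only check the bookkeeping. First, that the hypothesis ``even order $R_0$ tensor'' in the corollary is literally the conjunction ``$R_0$ tensor'' and ``$m$ even'' required by Proposition~\ref{trr}(ii); it is. Second, that $\deg(\I,\B)$ is well defined in the first place, i.e.\ that $\{\I,\B\}$ is an ${\bf R_0}$ tensor pair so that the HTCP-degree of the pair is defined; this is supplied by Proposition~\ref{trr}(i). No genuine obstacle arises: the corollary is a formal consequence of the preceding proposition, and the substantive work — the homotopy chain linking $F$, $H$ and the Cartesian-product reduction of $\deg(\I,\B)$ to $\deg(\B)\cdot\deg(\Phi,\Delta_2,{\bf 0})$ with $\Phi(\x)=\x^{[m-1]}$, together with the Borsuk-type fact that $\deg(\Phi,\Delta_2,{\bf 0})=1$ — has already been carried out in the proof of Proposition~\ref{trr}.

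Thus the write-up is essentially: invoke Proposition~\ref{trr}(ii) to get $\deg(\I,\B)=\deg(\B)$, then note $\deg(\B)\neq 0$ forces $\deg(\I,\B)\neq 0$.
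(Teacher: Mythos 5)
Your proposal is correct and matches the paper, which presents this corollary as a direct consequence of Proposition~\ref{trr}: part (i) guarantees $\{\mathcal{I},\B\}$ is an ${\bf R_0}$ tensor pair so that $\deg(\mathcal{I},\B)$ is well defined, and part (ii) gives $\deg(\mathcal{I},\B)=\deg(\B)\neq 0$. Nothing further is needed.
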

\begin{corollary}
	Let $\B$ be an even order ${ R}_0$ tensor. If $\deg(\B)\neq 0$, then ${\rm HTCP}(\mathcal{I},\B,\q)$ has a nonempty compact solution set for every  $\q\in\Rr$.
\end{corollary}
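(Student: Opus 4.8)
The plan is to obtain this as an immediate consequence of Proposition~\ref{trr}, the preceding corollary, and Theorem~\ref{Rrr}; no new argument is required beyond checking that the two hypotheses of Theorem~\ref{Rrr} hold for the pair $\{\I,\B\}$.

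First I would verify hypothesis (i) of Theorem~\ref{Rrr}. Since $\B$ is an $R_0$ tensor, Proposition~\ref{trr}(i) gives directly that $\{\I,\B\}$ is an ${\bf R_0}$ tensor pair. Next I would verify hypothesis (ii), namely that $\deg(\I,\B)\neq 0$. Because $\B$ has even order, Proposition~\ref{trr}(ii) yields $\deg(\I,\B)=\deg(\B)$, and the right-hand side is nonzero by assumption; equivalently, this is exactly what the preceding corollary records. Hence $\deg(\I,\B)\neq 0$.

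With both hypotheses of Theorem~\ref{Rrr} in hand, I would conclude that $\sol(\I,\B,\q)$ is a nonempty compact set for every $\q\in\Rr$, which is the assertion. There is no genuine obstacle at this level: the substantive work --- in particular the use of the evenness of $m$ to ensure that $\y\wedge\x$ and $\y\wedge\x^{[m-1]}$ share the same sign, which is what drives the homotopy identifying $\deg(\I,\B)$ with the TCP-degree $\deg(\B)$, together with the boundedness argument of Theorem~\ref{bdd} underlying Theorem~\ref{Rrr} --- has already been carried out. Thus this corollary is purely a matter of assembling those earlier results, and I would present it in two or three short lines.
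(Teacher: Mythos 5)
Your proposal is correct and matches the paper's intended argument: the paper presents this corollary (together with the one before it) as a direct consequence of Proposition~\ref{trr} combined with Theorem~\ref{Rrr}, exactly as you assemble it. Nothing further is needed.
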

\subsection{{\bf R} tensor pair}
In this subsection, we first extend the concept of $R$ pair to tensors, and then we study the existence of the solution to the HTCP. To proceed further, we recall the concept of partial symmetrization for a tensor from \cite{weak}.

A tensor ${\A}=(a_{i_{1}i_{2}...i_{m}}) \in T(m,n)$ can always be converted into a tensor $\bar{\A}= (\bar{a}_{{i_{1}i_{2}...i_{m}}})\in T(m,n) $ with 
	$$\bar{a}_{i_1i_2\cdots i_m} = \frac{1}{(m-1)!}\sum\limits_{\tau(i_2\cdots i_m)}{a}_{i_1\tau(i_2\cdots i_m)},$$ where $\tau$ is a permutation on the set $\{i_{2},...,i_{m}\}$ and
\begin{equation}\label{e1}{\A}{\x}^{m-1} = {\bar{\A}{\x}^{m-1}},~\text{for all}~{\x}\in {\Rr}.\end{equation} 
$\bar\A$ is called partial symmetrize tensor. It is  noted that, 
$\nabla({\A}{\x}^{m-1}) = \nabla(\bar{{\A}}{\x}^{m-1}) = (m-1){\bar\A}{\x}^{m-2}$, where $\nabla({\A}{\x}^{m-1})$ is Jacobain matrix of ${\rm\Phi}(\x)=\A\xm$ and   $N={\bar\A}{\x}^{m-2}\in\R$ \cite{qil} with 
$$({\bar\A}{\x}^{m-2})_{ij}= \displaystyle{\sum_{i_{3},...,i_{m}=1}^n}\bar a_{ij i_{3}...i_{m}}x_{i_{3}}...x_{i_{m}}.$$
Also  \begin{equation}\label{e2}
	N\x=({{\bar\A}}{\x}^{m-2}){\x} ={{\bar\A}}{\x}^{m-1}= {{\A}}{\x}^{m-1}.\end{equation} We write $\nabla({\A}{\x}^{m-1})$ at the point ${\x}={\bar{\x}}$ as $\nabla({\A}{\bar{\x}^{m-1}})$. 
\begin{lemma}\label{HTCP}
	Let ${\A},{\B} \in T(m,n)$ and ${\bf q} \in {\Rr}$. If $(\bar{\x},\bar{\y})$ is a solution of the $\mathrm{HTCP}({\A},{\B},{\bf q})$ then $(\bar{\x},\bar{\y})$ is a solution of the $\mathrm{HLCP}(\nabla({\A}{\bar\x}^{m-1}),\nabla({\B}{\bar\y}^{m-1}),(m-1){\bf q})$.
\end{lemma}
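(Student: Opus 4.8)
The plan is to unwind both problems and observe that the complementarity condition is literally unchanged, so all the work lies in verifying the linear equation of the HLCP. Suppose $(\bar\x,\bar\y)$ solves $\mathrm{HTCP}(\A,\B,\q)$, i.e. $\bar\x\wedge\bar\y={\bf 0}$ and $\A\bar\x^{m-1}-\B\bar\y^{m-1}=\q$. Writing $A:=\nabla(\A\bar\x^{m-1})$ and $B:=\nabla(\B\bar\y^{m-1})$, the pair $\mathrm{HLCP}(A,B,(m-1)\q)$ asks for $\bar\x\wedge\bar\y={\bf 0}$ together with $A\bar\x-B\bar\y=(m-1)\q$. The first condition is exactly the one already satisfied, so only the second must be checked.

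First I would recall, from the discussion preceding the lemma, that $\nabla(\A\x^{m-1})=(m-1)\bar\A\x^{m-2}=(m-1)N$ with $N=\bar\A\x^{m-2}\in\R$, and that equation (\ref{e2}) gives the Euler-type homogeneity identity $N\x=(\bar\A\x^{m-2})\x=\bar\A\x^{m-1}=\A\x^{m-1}$. Evaluating this at $\x=\bar\x$ yields $\nabla(\A\bar\x^{m-1})\,\bar\x=(m-1)\,\A\bar\x^{m-1}$, and evaluating the analogous identity for $\B$ at $\y=\bar\y$ yields $\nabla(\B\bar\y^{m-1})\,\bar\y=(m-1)\,\B\bar\y^{m-1}$.

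Subtracting, I obtain
\begin{equation*}
A\bar\x-B\bar\y=\nabla(\A\bar\x^{m-1})\,\bar\x-\nabla(\B\bar\y^{m-1})\,\bar\y=(m-1)\big(\A\bar\x^{m-1}-\B\bar\y^{m-1}\big)=(m-1)\q,
\end{equation*}
which is precisely the linear equation of $\mathrm{HLCP}(A,B,(m-1)\q)$. Together with $\bar\x\wedge\bar\y={\bf 0}$ this shows $(\bar\x,\bar\y)$ solves that HLCP, completing the argument.

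There is no genuine obstacle here: the proof is essentially a one-line consequence of the homogeneity identity (\ref{e2}). The only point requiring a little care is that the gradient $\nabla$ is taken with respect to the partial symmetrization $\bar\A$ rather than $\A$ itself, but equations (\ref{e1}) and (\ref{e2}) are stated exactly so that $\A\x^{m-1}$, $\bar\A\x^{m-1}$ and $(\bar\A\x^{m-2})\x$ all coincide, so this causes no difficulty. One might also remark explicitly that $m\ge 2$ ensures the factor $(m-1)$ is positive, though it is not needed for the implication itself.
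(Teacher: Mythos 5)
Your proposal is correct and follows the same route as the paper: both reduce the claim to the Euler-type homogeneity identity $\nabla(\A\bar\x^{m-1})\bar\x=(m-1)\A\bar\x^{m-1}$ coming from equations (\ref{e1}) and (\ref{e2}), apply the analogous identity for $\B$, and subtract, while the complementarity condition carries over unchanged. No gaps.
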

\begin{proof}      
	Since $(\bar{\x},\bar{\y})$ is a solution of the $\mathrm{HTCP}({\A},{\B},{\bf q})$, we have $\bar{\x}\wedge\bar{\y}={\bf 0}$ and ${\A}{\bar\x}^{m-1}-{\B}{\bar\y}^{m-1}={\bf q}$. For $(\x,\y)\in\Rr\times\Rr$, due to \ref{e1} and \ref{e2}
	\begin{equation*}
		\begin{aligned}
			(m-1)({\A}{\x}^{m-1}-{\B}{\y}^{m-1}) &= (m-1)({\bar \A}{\x}^{m-1} - {\bar \B}{\y}^{m-1}),\\
			&=(m-1)(({\bar \A}{\x}^{m-2})\x - ({\bar \B}{\y}^{m-2})\y),\\
			&= (\nabla ({\bar\A}{\x}^{m-1})){\x}-(\nabla({\bar\B}{\y}^{m-1})){\y},\\
			&= (\nabla ({\A}{\x}^{m-1})){\x}-(\nabla({\B}{\y}^{m-1})){\y}.
		\end{aligned}
	\end{equation*}
	This gives,  $(\nabla ({\A}{\bar\x}^{m-1})){\bar\x}-(\nabla({\B}{\bar\y}^{m-1})){\bar\y} = (m-1){\bf q}$. Hence $(\bar{\x},\bar{\y})$ is a solution of the ${\mathrm {HLCP}}(\nabla({\A}{\bar\x}^{m-1}),\nabla({\B}{\bar\y}^{m-1}),(m-1){\bf q})$. 
\end{proof}

\begin{definition} Let ${\A},{\B} \in T(m,n)$. We say  $\{{\A},{\B}\}$ is an ${\bf R}$ tensor pair if 
	\begin{enumerate}
		\item[\rm(i)] $\{{\A},{\B}\}$ is an ${\bf R_{0}}$ tensor pair.
		\item[\rm(ii)] There exists ${\bf q} \in {\Rr}$ such that
		\begin{enumerate}
			\item[\rm(a)] $\mathrm{HTCP}({\A},{\B},{\bf q})$ has a unique solution, say, $({\bar{\x}},{\bar{\y}})$,
			\item[\rm(b)] ${\bar{\x}}+{\bar{\y}} > {\bf 0}$, and 
			\item[\rm(c)] $\mathrm{HLCP}(\nabla({\A}{\bar{\x}}^{m-1}),\nabla({\B}{\bar{\y}}^{m-1}),(m-1){\bf q})$ has a unique solution.
		\end{enumerate}
	\end{enumerate}
\end{definition}
Note that, if $\{{\A},{\B}\}$ is an ${\bf R}$ tensor pair, then $\bar{\x} \land \bar{\y} = {\bf 0}$ and ${\bar{\x}}+\bar{\y} > {\bf 0}$. Therefore, without loss of generality, we can say that $\bar{\x}$ and $\bar{\y}$ takes the form ${\bar{\x}} = (\bar{x}_{1}, \bar{x}_{2},...,\bar{x}_{k},0,0,...,0)^t$ and  ${\bar{\y}} = (0, 0,...,0,\bar{y}_{k+1},\bar{y}_{k+2},...,\bar{y}_{n})^t$, for some $k \leq n$. Let us write $\varphi({\x},{\y}) = {\x} \land {\y}$ 
and the partial derivative of $\varphi({\x},{\y})$ with respect to ${\x}$ and ${\y}$, at the point ${\bar{\z}}=({\bar{\x}},{\bar{\y}})$, are denoted by ${\varphi}_{\x}^{'}({\bar{\z}})$ and ${\varphi}_{\y}^{'}({\bar{\z}})$, respectively. We record the following fact about $\varphi({\x},{\y})$  from (\cite{wcp}, page 10)  which will be used in further discussions. If for any $({\bf u},{\bf v}) \in {\Rr}\times {\Rr}$
\begin{equation}\label{Phi}
	{\varphi}_{\x}^{'}({\bar{\z}}){\u}+{\varphi}_{\y}^{'}({\bar{\z}}){\bf v}={\bf 0},
\end{equation} 
then $u_{i} = 0$ whenever $\bar x_{i}=0$ and $v_{i}=0$ whenever $\bar y_{i}=0$. Therefore ${\bf u}$ and ${\bf v}$ takes the form ${\bf u}=(u_{1}, u_{2},...,u_{k},0,0,...,0)^t$ and ${\bf v} = (0,0,...,0,v_{k+1},v_{k+2},...,v_{n})^t.$

We now give an example of an ${\bf R} $ tensor pair.
\begin{example}\rm Let $m$ be even and ${\B} \in T(m,n)$ be an $R$ tensor. We show that $\{{\mathcal{I}},{\B}\}$ is an ${\bf R}$ tensor pair. It is easy to verify that a vector ${\y} \in {\Rr}$ is a solution of the $\mathrm{TCP} ({\B}, {\q})$ if and only if $(({\B}{\y}^{m-1}+{\q})^{[\frac{1}{m-1}]}, {\y})$ is a solution of $\mathrm{HTCP}({\mathcal I},{\B},{\q})$. From definition of $R$ tensor, $\mathrm{HTCP}(\mathcal{I},{\B},{\bf 0})$ has a unique solution $({\bf 0},{\bf 0})$ and the $\mathrm{HTCP}(\mathcal{I},{\B},{\bf e})$ has a unique solution, $({\bf e}^{[\frac{1}{m-1}]},{\bf 0})$. Therefore we get $\{{\mathcal{I}},{\B}\}$ is an ${\bf R}_{0}$ pair and $({\bar{\x}},{\bar{\y}}) =({\bf e}^{[\frac{1}{m-1}]},{\bf 0})$ is the unique solution of the   $\mathrm{HTCP}({\mathcal{I}},{\B},{\bf e})$. Also ${\bar{\x}}+{\bar{\y}} > {\bf 0}$. 
	From Lemma \ref{HTCP}, we get  $({\bar{\x}},{\bar{\y}}) =({\bf e}^{[\frac{1}{m-1}]},{\bf 0})$ is the only solution of $\mathrm{HLCP}(\nabla({\mathcal{I}}{\bar{\x}^{m-1}}),\nabla({\B}{\bar{\y}}^{m-1}),(m-1){\bf e})$. Hence $\{{\mathcal{I}},{\B}\}$ is an ${\bf R}$ tensor pair.
	
\end{example}
\begin{theorem}\label{rdeg}
	Let ${\A},{\B}\in T(m,n)$. If $\{{\A},{\B}\}$ is an ${\bf R}$ tensor pair then, $\S$ is a nonempty compact set for all $\q\in\Rr$.
\end{theorem}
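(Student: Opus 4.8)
The plan is to reduce the whole statement to showing that the HTCP-degree $\deg(\A,\B)$ is nonzero, and then quote Theorem~\ref{Rrr}. Since every $\mathbf{R}$ tensor pair is by definition an $\mathbf{R}_0$ tensor pair, Theorem~\ref{bdd} already yields that $\S$ is compact for every $\q$, so the only real content is the nonemptiness; and by Theorem~\ref{Rrr} this follows once we know $\deg(\A,\B)\neq 0$.

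To compute the degree, let $\q$ be the vector furnished by the definition of an $\mathbf{R}$ tensor pair, with $(\bar\x,\bar\y)$ the unique solution of $\mathrm{HTCP}(\A,\B,\q)$, with $\bar\x+\bar\y>{\bf 0}$, and with $\mathrm{HLCP}(\nabla(\A\bar\x^{m-1}),\nabla(\B\bar\y^{m-1}),(m-1)\q)$ uniquely solvable; by Lemma~\ref{HTCP} its unique solution is exactly $(\bar\x,\bar\y)$. I would then run the same homotopy $G(\z,t)$ used in the proof of Theorem~\ref{Rrr}: the $\mathbf{R}_0$ property forces the zero set $Z$ to be (uniformly) bounded, so for any bounded open $\Delta\supseteq Z$ in $\Rr\times\Rr$, homotopy invariance gives $\deg(\A,\B)=\deg(G(\cdot,1),\Delta,{\bf 0})$. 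Since $G(\cdot,1)(\z)=\big(\x\wedge\y,\ \A\xm-\B\ym-\q\big)$ vanishes only at $(\bar\x,\bar\y)$ by condition (a), this common degree is the local degree $\deg(G(\cdot,1),(\bar\x,\bar\y))$.

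Next I would evaluate this local degree through the Jacobian. The polynomial block $\A\xm-\B\ym$ is smooth, and because $\bar\x\wedge\bar\y={\bf 0}$ together with $\bar\x+\bar\y>{\bf 0}$ forces $\bar x_i\neq \bar y_i$ in every coordinate, the map $\varphi(\x,\y)=\x\wedge\y$ is differentiable at $\bar\z=(\bar\x,\bar\y)$; hence $G(\cdot,1)$ is differentiable there with derivative
$$\begin{bmatrix}\varphi_{\x}'(\bar\z) & \varphi_{\y}'(\bar\z)\\[2pt] \nabla(\A\bar\x^{m-1}) & -\nabla(\B\bar\y^{m-1})\end{bmatrix}.$$
I claim this matrix is nonsingular. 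If $(\uu,\vv)$ is in its kernel, then $\varphi_{\x}'(\bar\z)\uu+\varphi_{\y}'(\bar\z)\vv={\bf 0}$, so by the fact recorded at (\ref{Phi}) we have $u_i=0$ whenever $\bar x_i=0$ and $v_i=0$ whenever $\bar y_i=0$, while the second block equation gives $\nabla(\A\bar\x^{m-1})\uu=\nabla(\B\bar\y^{m-1})\vv$. Then, for every sufficiently small $\epsilon>0$, the pair $(\bar\x+\epsilon\uu,\bar\y+\epsilon\vv)$ still satisfies the linear equation of $\mathrm{HLCP}(\nabla(\A\bar\x^{m-1}),\nabla(\B\bar\y^{m-1}),(m-1)\q)$, and a coordinatewise check — using $\bar\x+\bar\y>{\bf 0}$ (so in each coordinate exactly one of $\bar x_i,\bar y_i$ is positive) together with the support conditions on $\uu,\vv$ — shows $(\bar\x+\epsilon\uu)\wedge(\bar\y+\epsilon\vv)={\bf 0}$. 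Thus the perturbed pair solves the same HLCP, and uniqueness of its solution forces $(\uu,\vv)=({\bf 0},{\bf 0})$. Nonsingularity then gives $\deg(G(\cdot,1),(\bar\x,\bar\y))=\sgn\det(\cdot)=\pm 1$, hence $\deg(\A,\B)\neq 0$, and Theorem~\ref{Rrr} concludes that $\S$ is nonempty and compact for all $\q\in\Rr$.

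The main obstacle is the Jacobian-kernel computation — precisely the verification that the perturbation $(\bar\x+\epsilon\uu,\bar\y+\epsilon\vv)$ stays complementary — since this is exactly the place where conditions (b) ($\bar\x+\bar\y>{\bf 0}$) and (c) (uniqueness of the linearized HLCP) of the $\mathbf{R}$ tensor pair are genuinely needed. The remaining ingredients (boundedness of $Z$, collapsing the degree over $\Delta$ to a local degree at the unique zero, and differentiability of the minimum map at $\bar\z$) are routine in light of the results already established.
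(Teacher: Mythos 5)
Your proposal is correct and follows essentially the same route as the paper's proof: reduce to showing $\deg(\A,\B)\neq 0$ via Theorem~\ref{Rrr}, run the homotopy $G(\z,t)$ to collapse the degree to the local degree at the unique solution $\bar\z$ of $\mathrm{HTCP}(\A,\B,\q)$, and show the Jacobian $G'(\bar\z,1)$ is nonsingular by perturbing $(\bar\x,\bar\y)$ along a kernel vector and invoking the uniqueness of the linearized HLCP. Your explicit justification of the differentiability of $\x\wedge\y$ at $\bar\z$ via $\bar\x+\bar\y>{\bf 0}$ is a small point the paper leaves implicit, but otherwise the arguments coincide.
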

\begin{proof}  Let $\{{\A},{\B}\}$ be an ${\bf R}$ tensor pair. This implies $\{{\A},{\B}\}$ is an  ${\bf R_{0}}$ tensor pair. In view of Theorem \ref{Rrr}, it is enough to prove 	$\deg({\A},{\B})$ is nonzero.
	Let ${\z}=({\x},{\y}) \in {\Rr}\times {\Rr}$, $t \in [0,1]$ and  we consider the function $$G({\z},t) = \left[\begin{array}{c}
		\varphi({\x},{\y})  \\
		{\A}{\x}^{m-1}-{\B}{\y}^{m-1}-t{\bf q}
	\end{array}
	\right],~\text{where}~ \varphi({\x},{\y}) = {\x} \land {\y}.$$
	Similar to theorem \ref{Rrr}, we get the the zero sets of the homotopy $G({\z},t)$ are uniformly bounded as $t$ varies from 0 to 1 and hence contained in some  bounded open subset $\Delta$ of ${\Rr}\times{\Rr}$. Therefore by the homotopy invariance property of the degree, we have 
	\begin{equation}\label{Rpair}
		\deg({\A},{\B})= \deg(G(\cdot,0),\Delta,{\bf 0})=\deg(G(\cdot,1),\Delta,{\bf 0}).
	\end{equation}
	We claim that $\deg(G(\cdot,1),\Delta,{\bf 0}) \neq 0$. Since $\{{\A},{\B}\}$ is an ${\bf R}$ tensor pair, therefore $G({\z},1) = {\bf 0}$ if and only if ${\z}={\bar{\z}}$, where ${\bar{\z}} = ({\bar{\x}},{\bar{\y}})$ is the unique solution of the $\mathrm{HTCP}({\A},{\B},{\bf q})$. Therefore $$\deg(G(\cdot,1),{\bf 0})=\deg(G(\cdot,1),\Delta,{\bf 0})=\deg(\A,\B).$$ We now prove that ${\rm det} G^{'}({\bar{\z}},1)\neq 0$. By taking derivative of $G$ at $\bar{\z}$, we get $$G^{'}({\bar{\z}},1)= \left[\begin{array}{cc}
		{\varphi}_{\x}^{'}({\bar{\z}}) &  {\varphi}_{\y}^{'}({\bar{\z}})\\
		\nabla({\A}{\bar{\x}}^{m-1}) & ~-\nabla({\B}{\bar{\y}}^{m-1})
	\end{array}
	\right].$$ Suppose that $G^{'}({\bar{\z}},1)$ is singular, so there exists nonzero $\tilde{\z}=({\bf u},{\bf v})$ in ${\Rr} \times {\Rr}$ such that $(G^{'}({\bar{\z}},1))\tilde{\z}={\bf 0}$. This implies
	\begin{equation}\label{Phi}
		{\varphi}_{\x}^{'}({\bar{\z}}){\u}+{\varphi}_{\y}^{'}({\bar{\z}}){\bf v}={\bf 0},~\text{and}
	\end{equation}
	\begin{equation}\label{nabla}
		\nabla({\A}{\bar{\x}}^{m-1}){\u} - \nabla({\B}{\bar{\y}}^{m-1}){\bf v} = {\bf 0}.
	\end{equation}
	From equation (\ref{Phi}), we get ${\bf u}=(u_{1}, u_{2},...,u_{k},0,0,...,0)$
	and ${\bf v} = (0,0,...,0,v_{k+1},
	v_{k+2},...,v_{n})$, where $k \leq n$. Therefore  for all sufficiently small $\delta > 0$, we have $({\bar{\x}}+\delta{\u}) \land ({\bar{\y}}+\delta{\bf v}) = {\bf 0}$. Due to $\{{\A},{\B}\}$ being an ${\bf R}$ tensor pair, we have (x bar, y bar ) is the unique solution of HTCP$(\A,\B,\q)$ and from lemma \ref{HTCP}, we see that the $\mathrm{HLCP}(\nabla({\A}{\bar{\x}}^{m-1}),\nabla({\B}{\bar{\y}}^{m-1}),(m-1){\bf q})$ has a unique solution, which is $(\bar{\x},\bar{\y})$. However from equation (\ref{nabla}), we get that $({\bar{\x}}+\delta{\u},{\bar{\y}}+\delta{\bf v})$ is a solution of the $\mathrm{HLCP}(\nabla({\A}{\bar{\x}}^{m-1}),\nabla({\B}{\bar{\y}}^{m-1}),(m-1){\bf q})$, for all sufficiently small $\delta >0$. Hence we get a contradiction to $\{{\A},{\B}\}$ being an ${\bf R}$ tensor pair. Therefore the matrix  $G^{'}({\bar{\z}},1)$ is nonsingular and hence $\sgn ~{\rm det} G^{'}({\bar{\z}},1) \neq 0$. This implies
	$$\deg({\A},{\B})= \sgn ~{\rm det} G^{'}({\bar{\z}},1) \neq 0.$$ Hence $\S$ is a nonempty compact set for all $\q\in\Rr$.
\end{proof}

\subsection{{\bf P} tensor pair}
For ${A,B}\in\R$, we say pair $\{A,B\}$ is  a $P$ pair \cite{PP0} if $$[\x*\y\leq {\bf 0},~A\x-B\y={\bf 0}]\implies (\x,\y)={({\bf0},{\bf 0})}.$$ From \cite{PP0}, this is equivalent to $A^{-1}B$ being a $P$ matrix. The $P$ pair gives the existence and uniqueness of the solution to the corresponding HLCP.

In this section, we define the ${\bf P} ~tensor ~pair$ which extends the notion of $P$ pair of matrix. 
\begin{definition}\rm
	Let $\A,\B\in T(m,n)$. We say   $\{\A,\B\}$ is a ${\bf P}$ tensor pair if $$[ \x*\y\leq {\bf 0},~	\A\xm -\B\ym={\bf 0}]\implies (\x,\y)=({\bf 0},{\bf 0}).$$ 
\end{definition}
\begin{prop}\label{MB}
	Let $M\in\R$ be an order $2$ left inverse of a tensor $\A\in T(m,n)$. Then $\{\A,\B\}$ is an even order ${\bf P}$ tensor pair if and only if $M\B$ is a $P$ tensor.
\end{prop}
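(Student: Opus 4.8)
The plan is to reduce the claim about the tensor pair $\{\A,\B\}$ to the defining condition of a $P$ tensor by "left-multiplying" the horizontal system $\A\xm - \B\ym = \mathbf{0}$ by $M$. Recall from Theorem \ref{minv} that an order $2$ left inverse $M$ is a nonsingular $n\times n$ matrix with $M\A = \mathcal{I}$; by the associativity property (Definition \ref{tp}(iii)) and the fact that $M$ is a matrix, one has $M(\A\xm) = (M\A)\xm = \mathcal{I}\xm = \x^{[m-1]}$ for every $\x\in\Rr$. Hence, multiplying $\A\xm - \B\ym = \mathbf{0}$ on the left by the nonsingular matrix $M$ gives the equivalent identity $\x^{[m-1]} = (M\B)\ym$. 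This is the key algebraic step, and it is where Theorem \ref{minv} and the tensor-product properties are used.

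First I would prove the forward direction. Suppose $\{\A,\B\}$ is an even order $\mathbf{P}$ tensor pair; I must show $M\B$ is a $P$ tensor, i.e. $\y * (M\B)\ym \leq \mathbf{0}$ implies $\y = \mathbf{0}$. Given such a $\y$, set $\x := ((M\B)\ym)^{[\frac{1}{m-1}]}$, which is well defined because $m$ is even (so the map $t\mapsto t^{m-1}$ is a bijection of $\Rr$ componentwise, and the $(m-1)$-th root is single-valued). Then $\x^{[m-1]} = (M\B)\ym$, so by the equivalence above $\A\xm - \B\ym = M^{-1}(\x^{[m-1]} - (M\B)\ym) = \mathbf{0}$. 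Moreover, since $t\mapsto t^{m-1}$ preserves sign for even $m$, the $i$-th coordinate of $\x$ has the same sign as the $i$-th coordinate of $(M\B)\ym$; combined with $\y * (M\B)\ym \leq \mathbf{0}$ this yields $\x * \y \leq \mathbf{0}$. The $\mathbf{P}$ tensor pair property now forces $(\x,\y) = (\mathbf{0},\mathbf{0})$, hence $\y = \mathbf{0}$, as required.

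For the converse, suppose $M\B$ is a $P$ tensor and take $(\x,\y)$ with $\x * \y \leq \mathbf{0}$ and $\A\xm - \B\ym = \mathbf{0}$. Left-multiplying by $M$ gives $\x^{[m-1]} = (M\B)\ym$. Then $\y * (M\B)\ym = \y * \x^{[m-1]}$, whose $i$-th coordinate is $y_i x_i^{m-1}$; since $m$ is even, $x_i^{m-1}$ has the same sign as $x_i$, so $y_i x_i^{m-1}$ has the same sign as $y_i x_i$, and therefore $\y * (M\B)\ym \leq \mathbf{0}$ follows from $\x * \y \leq \mathbf{0}$. Because $M\B$ is a $P$ tensor we get $\y = \mathbf{0}$, whence $\x^{[m-1]} = (M\B)\mathbf{0} = \mathbf{0}$ and so $\x = \mathbf{0}$. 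Thus $\{\A,\B\}$ is a $\mathbf{P}$ tensor pair.

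The only genuinely delicate point is the repeated use of the even-order hypothesis to pass between the sign of $x_i$ and the sign of $x_i^{m-1}$, and to invert the coordinatewise power map when constructing $\x$ from $\y$; without $m$ even, neither the sign-matching nor the definition of $\x$ would go through, which is exactly why the statement is restricted to even order pairs. Everything else is a routine application of $M\A = \mathcal{I}$ and the associativity of the tensor product, so I expect no further obstacles.
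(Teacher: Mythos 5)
Your proposal is correct and follows essentially the same route as the paper: left-multiply the horizontal equation by $M$ via $M\A=\mathcal{I}$, construct $\x=((M\B)\ym)^{[\frac{1}{m-1}]}$ in the forward direction, and use the sign-preservation of $t\mapsto t^{m-1}$ for even $m$ to pass between $\x*\y\leq\mathbf{0}$ and $\y*(M\B)\ym\leq\mathbf{0}$. Your write-up is in fact slightly more explicit than the paper's about why $\x$ exists and why the sign transfer works.
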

\begin{proof} First assume $\{\A,\B\}$ is an even order ${\bf P}$ tensor pair. To prove $M\B$ is a $P$ tensor, let $\y\in\Rr$ with $\y*(M\B)\ym\leq {\bf 0}.$ As $m$ is even, $\y^{[m-1]}*(M\B)\ym\leq {\bf 0}.$ For some $\x\in\Rr$, we write $\x^{[m-1]}=\mathcal{I}\xm=(M\B)\ym.$ As $M$ is the order $2$ left inverse of $\A$ then, by theorem \ref{minv}, $M\A={\mathcal{I}}$. Therefore $(M\A)\x^{m-1}=(M\B)\ym.$ From definition \ref{tp},  $\A\xm -\B\ym={\bf 0}$ and $\x*\y\leq {\bf 0}.$ Thus $(\x,\y)=({\bf 0},{\bf 0})$ as $\{\A,\B\}$ is a ${\bf P}$ tensor pair.  Hence  $M\B$ is a $P$ tensor.\\ For converse,  let $(\x,\y)\in\Rr\times\Rr$ such that $\x*\y\leq {\bf 0},~\A\xm-\B\ym={\bf 0}.$ As $M$ is the order 2 left inverse of $\A$, ${\mathcal I}\xm-(M\B)\ym={\bf 0}$. This implies $\x^{[m-1]}=(M\B)\ym.$ Also $\x*\y\leq {\bf 0}\implies \x^{[m-1]}*\y\leq {\bf 0}.$ From this, $(M\B)\ym*\y\leq{\bf 0}.$ Thus we get $\y={\bf0}$ due to the fact that  $M\B$ is a $P$ tensor. This gives us $(\x,\y)=({\bf 0},{\bf 0}).$ Hence $\{\A,\B\}$ is a ${\bf P}$ tensor pair.
\end{proof}

\begin{theorem}
	Suppose $\A,\B\in E(m,n)$. If $\{\A,\B\}$ is a ${\bf P}$ tensor pair then, the following statements hold.
	\begin{itemize}
		\item [\rm(i)] $\A$ and $\B$ have no zero H-eigenvalues.
		\item[\rm(ii)] All the Z-eigenvalues of  $\A$ and $\B$ are nonzero.
		\item[\rm(iii)] $\A$ has no negative $\B$-eigenvalues.
		\item[\rm(iv)] $\{P\A P^t,~P\B P^t\}$ is a ${\bf P}$ tensor pair for any permutation matrix  $P\in\R$.
	\end{itemize}
\end{theorem}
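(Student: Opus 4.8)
The plan is to prove each of the four statements by setting up a suitable instance of the defining implication of a $\mathbf{P}$ tensor pair, namely that $\x * \y \le \mathbf{0}$ together with $\A\xm - \B\ym = \mathbf{0}$ forces $(\x,\y) = (\mathbf{0},\mathbf{0})$. For item (i), suppose $\mathbf{0}$ is an H-eigenvalue of $\A$, so there is $\x \neq \mathbf{0}$ with $\A\xm = \mathbf{0}\cdot\x^{[m-1]} = \mathbf{0}$; then take $\y = \mathbf{0}$, so that $\x * \y = \mathbf{0} \le \mathbf{0}$ and $\A\xm - \B\ym = \mathbf{0}$, contradicting that $\x \neq \mathbf{0}$. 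The same argument with the roles of $\A$ and $\B$ swapped (and $\x = \mathbf{0}$, $\y$ the eigenvector) handles $\B$. Notice that for this we only need $\x * \mathbf{0} = \mathbf{0}$, which is trivially true, so evenness is not even essential for (i), though the hypothesis is stated for the even-order case throughout.

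For item (ii), the argument is essentially identical but uses the $\B$-eigenvalue form with $\lambda = 0$, or directly: if $\lambda$ is a Z-eigenvalue of $\A$ that equals $0$, then there is $\x$ with $\x^t\x = 1$ and $\A\xm = \mathbf{0}\cdot\x = \mathbf{0}$, and again pairing with $\y = \mathbf{0}$ contradicts the $\mathbf{P}$ pair property since $\x \neq \mathbf{0}$. For item (iii), suppose $\lambda < 0$ is a $\B$-eigenvalue of $\A$, so there is $\x \neq \mathbf{0}$ with $\A\xm - \lambda\B\xm = \mathbf{0}$. The idea is to absorb the negative scalar $\lambda$ into the vector: since $m$ is even and $\lambda < 0$, set $\y = (-\lambda)^{1/(m-1)}\x$ so that $\B\ym = (-\lambda)^{(m-1)/(m-1)}\B\xm$ — wait, more carefully, $\B\ym = ((-\lambda)^{1/(m-1)})^{m-1}\B\xm = (-\lambda)\B\xm = -\lambda\B\xm$, hence $\A\xm - \B\ym = \A\xm + \lambda\B\xm$; that has the wrong sign, so instead one should exploit $\lambda < 0$ directly by writing $\A\xm = \lambda\B\xm$ and choosing $\x$ and $\y$ of opposite sign appropriately. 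Concretely, with $\y = \mu\x$ for a suitable $\mu$, one gets $\x * \y = \mu(\x * \x) = \mu\,\x^{[2]}$, which is $\le \mathbf{0}$ precisely when $\mu \le 0$; choosing $\mu = -(-\lambda)^{1/(m-1)} \le 0$ gives $\B\ym = \mu^{m-1}\B\xm = -(-\lambda)\B\xm = \lambda\B\xm = \A\xm$, so $\A\xm - \B\ym = \mathbf{0}$ and $\x * \y \le \mathbf{0}$ with $\x \neq \mathbf{0}$, the desired contradiction. The main subtlety here is checking the sign bookkeeping: one needs $m-1$ odd (i.e. $m$ even, which is given since $\A,\B \in E(m,n)$) so that $\mu^{m-1}$ has the same sign as $\mu$, and one needs $\lambda < 0$ so that $-(-\lambda)^{1/(m-1)}$ is a well-defined negative real.

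For item (iv), let $P$ be a permutation matrix and suppose $\x * \y \le \mathbf{0}$ and $(P\A P^t)\xm - (P\B P^t)\ym = \mathbf{0}$. The natural move is the substitution $\x = P\u$, $\y = P\v$ (equivalently $\u = P^t\x$, $\v = P^t\y$). I would verify the identity $(P\mathcal{C}P^t)(P\u)^{m-1} = P(\mathcal{C}\u^{m-1})$ for any tensor $\mathcal{C}$, which follows from writing out the definition of the tensor action and using that $P^tP = I$; then the equation becomes $P(\A\u^{m-1} - \B\v^{m-1}) = \mathbf{0}$, hence $\A\u^{m-1} - \B\v^{m-1} = \mathbf{0}$ since $P$ is invertible. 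Moreover $\u * \v = (P^t\x)*(P^t\y)$ is just a permutation of the entries of $\x * \y$, hence still $\le \mathbf{0}$ componentwise. By the $\mathbf{P}$ pair hypothesis on $\{\A,\B\}$, we get $(\u,\v) = (\mathbf{0},\mathbf{0})$, and therefore $(\x,\y) = (P\u, P\v) = (\mathbf{0},\mathbf{0})$. I expect the only genuine obstacle across all four parts to be the sign/root manipulation in (iii); (i), (ii), and (iv) are routine once the right substitution is chosen, and the Hadamard-product-versus-permutation compatibility in (iv) should be checked but is elementary.
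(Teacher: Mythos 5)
Your proposal is correct and follows essentially the same route as the paper: pair the eigenvector with $\mathbf{0}$ for (i)--(ii), absorb the negative eigenvalue into the second vector via the odd $(m-1)$-th root for (iii), and substitute $\u=P^t\x$, $\v=P^t\y$ using the tensor product identity for (iv). Your sign bookkeeping in (iii) with $\mu=-(-\lambda)^{1/(m-1)}$ is in fact written more carefully than the paper's own $\lambda'=\lambda^{1/(m-1)}$ (which is the same quantity for $\lambda<0$ and $m-1$ odd).
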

\begin{proof}(i): If  H-eigenvalue does not exist, it is trivial. Let $\lambda\in\mathbb{R}$ be an H-eigenvalue, and $\x$ be the corresponding eigenvector of  $\lambda$ such that $$\A\xm=\lambda \x^{[m-1]}.$$ Assume $\lambda=0$. As  $\{\A,\B\}$ is an even order ${\bf P}$ tensor pair, $$[\x*{\bf 0}={\bf 0},~ \A\xm-\B{\bf 0}={\bf 0}]\implies (\x,{\bf 0})=({\bf 0},{\bf 0}).$$ It contradicts our assumption. Therefore, $\A$ has no zero H-eigenvalues. Similarly, $\B$ has no zero H-eigenvalues. \\
	(ii):  It follows as similar to item (i).\\
	(iii): In case of the non-existence of $\B$-eigenvalues of $\A$, it is trivial. Otherwise, assume the contrary: there exists $(\lambda, \x) \in \mathbb{R} \times \Rr$ such that $\x \neq {\bf 0}$ and $\lambda < 0$ and $$\A\x^{m-1}-\lambda\B\x^{m-1}={\bf 0},$$ we write it as \begin{equation} \begin{aligned}
			\x*(\lambda\x)&\leq {\bf 0},\\
			\A\x^{m-1}-\B(\lambda'\x)^{m-1}&={\bf 0},\\
		\end{aligned}
	\end{equation}where $\lambda'=\lambda^{\frac{1}{m-1}}$. As $\{\A,\B\}$ is   a ${\bf P}$ tensor pair, then $(\x,\lambda \x)=({\bf 0},{\bf 0}).$ This implies $\x={\bf 0}$, which gives us a contrary argument. Hence $\lambda\geq { 0}.$\\  
	(iv): Let $\z=(\x,\y)\in\Rr\times\Rr $ with  \begin{equation}
		\begin{aligned}
			{\x}* { \y}&\leq{\bf 0},\\
			P\A P^t\xm-P\B P^t\ym&={\bf 0}.\\
		\end{aligned}
	\end{equation}
	As $P$ is a permutation matrix, by the  tensor product in definition \ref{tp}, $P\A P^t\xm=P(\A\x^{m-1}_p)$, $P\B P^t\ym=P(\B\y^{m-1}_p)$, where $P^t\z=(P^t\x,P^t\y)=(\x_p,\y_p)$. Thus we write, \begin{equation}
		\begin{aligned}
			{\x_p}* {\y_p}&\leq{\bf 0},\\
			\A\x^{m-1}_p-\B\y^{m-1}_p&={\bf 0}.\\
		\end{aligned}
	\end{equation}
	As $\{\A,\B\}$ is a ${\bf P}$ tensor pair, $P^t\z={\bf 0}\implies \z=(\x,\y)=({\bf 0},{\bf 0}).$  Hence we have our claim.
\end{proof}

To prove next result we recall the following result (\cite{det}, Theorem 3.1).
\begin{theorem}
Let $\A\in T(m,n)$. Then ${\rm det}(\A)=0$ if and only if $\A\xm={\bf 0}$ for some nonzero $\x\in\Rr.$
\end{theorem}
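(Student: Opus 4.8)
The plan is to reduce the statement to the defining property of the multivariable resultant, since $\det(\A)$ is precisely this resultant. For $\A\in T(m,n)$, write $f_i(\x):=(\A\x^{m-1})_i$ for $i\in[n]$; each $f_i$ is a homogeneous polynomial of degree $m-1$ in the variables $x_1,\dots,x_n$, so $f_1,\dots,f_n$ form a square system of $n$ forms in $n$ variables, and $\det(\A)$ is by definition the resultant $\mathrm{Res}(f_1,\dots,f_n)$ of this system. The one external input I would use is the fundamental vanishing criterion for the resultant: $\mathrm{Res}(f_1,\dots,f_n)=0$ if and only if $f_1,\dots,f_n$ have a common zero other than the origin (over an algebraically closed field, i.e.\ a nonzero point of $\mathbb{C}^n$, equivalently a point of $\mathbb{P}^{n-1}(\mathbb{C})$).

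With this in hand, the ``if'' direction is immediate: a nonzero $\x$ with $\A\x^{m-1}=\mathbf{0}$ is in particular a nonzero common zero of $f_1,\dots,f_n$, hence $\det(\A)=\mathrm{Res}(f_1,\dots,f_n)=0$. For the ``only if'' direction, $\det(\A)=0$ forces $\mathrm{Res}(f_1,\dots,f_n)=0$, so the criterion yields a nonzero common zero of the system, that is, a nonzero vector $\x$ with $\A\x^{m-1}=\mathbf{0}$; this is exactly the content of the statement, once the existential quantifier is read in the same field in which the resultant criterion is phrased. (Equivalently, one can phrase the whole argument in terms of $0$ being an eigenvalue of $\A$, since $\A\x^{m-1}=\mathbf{0}$ for some nonzero $\x$ is the $\lambda=0$ eigenvalue equation.)

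The step I expect to be the main obstacle is not either logical direction above but supplying the resultant machinery itself: constructing $\mathrm{Res}(f_1,\dots,f_n)$ as a polynomial in the coefficients of the $f_i$ and proving it vanishes exactly on the locus of systems possessing a nontrivial common zero, together with the homogeneity, degree, and multiplicativity properties that make $\det(\A)$ behave well. This is standard elimination theory and I would quote it from a reference on resultants rather than redo it. A related subtlety worth flagging is the ground field: the vanishing criterion naturally produces a \emph{complex} common zero, and a system whose only nontrivial common zeros are complex (for instance the form $x_1^2+x_2^2$ used twice) has vanishing resultant yet no nonzero real zero, so the equivalence is genuinely an equivalence over $\mathbb{C}^n$; in \cite{det} it is stated in that setting, and one invokes it here in the form that matches the application at hand.
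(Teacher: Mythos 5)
The paper does not actually prove this statement: it is quoted as a known result (\cite{det}, Theorem~3.1), so there is nothing to compare your argument against line by line. Your proposal is essentially the standard argument that underlies the cited result --- $\det(\A)$ is by definition the resultant of the system $f_i(\x)=(\A\x^{m-1})_i$, and the resultant of $n$ forms in $n$ variables vanishes exactly when the system has a nontrivial common zero --- and both directions of your deduction from that criterion are correct. The one substantive point you raise, about the ground field, is not a pedantic caveat but a real defect of the statement as printed: the vanishing criterion for the resultant produces a nonzero common zero in $\mathbb{C}^n$, not in $\mathbb{R}^n$, and the implication from $\det(\A)=0$ to the existence of a nonzero \emph{real} $\x$ with $\A\x^{m-1}=\mathbf{0}$ is false in general. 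Your example $f_1=f_2=x_1^2+x_2^2$ (a tensor in $T(3,2)$) witnesses this, and the phenomenon persists for even order, e.g.\ $f_1=x_1(x_1^2+x_2^2)$, $f_2=x_2(x_1^2+x_2^2)$ gives a tensor in $T(4,2)$ with vanishing determinant whose only real common zero is the origin. In \cite{det} the result is stated with solutions in $\mathbb{C}^n$, and the paper's restatement with $\x\in\mathbb{R}^n$ is an overreach; this matters downstream, because the implication (ii)$\Rightarrow$ the existence of a real null vector is exactly what the subsequent characterization of even-order $\mathbf{P}$ tensor pairs relies on. So: your proof is correct for the correctly quantified ($\mathbb{C}^n$) statement, and your flagged obstacle identifies a genuine error in the statement as it appears in the paper rather than a gap in your own argument.
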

We present a characterization for an even order ${\bf P}$ tensor pair, which is easily checkable and extends a similar result for $P$ matrix pairs given in \cite{PP0}.
\begin{theorem}
Let $\A,\B\in E(m,n).$ Then, the following statements are equivalent. \begin{itemize}
	\item [\rm(i)] $\{\A,\B\}$ is a ${\bf P}$ tensor pair.
	\item [\rm(ii)] For arbitrary  nonnegative diagonal matrices $D_1,D_2\in\R$ with diag($D_1+D_2)>{\bf 0}$, $${\rm det}(\A D_1+\B D_2)\neq {0}.$$
\end{itemize} 
\end{theorem}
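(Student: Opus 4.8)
The plan is to build a dictionary between the defining property of a ${\bf P}$ tensor pair and the vanishing of ${\rm det}(\A D_1+\B D_2)$, and then obtain both implications from it. Two facts drive the argument. First, by Definition~\ref{tp}, for any $\mathcal{C}\in T(m,n)$ and any matrix $M\in\R$ the tensor $\mathcal{C}M\in T(m,n)$ satisfies $(\mathcal{C}M)\x^{m-1}=\mathcal{C}(M\x)^{m-1}$; applied to the diagonal matrices $D_1,D_2$ this yields $(\A D_1+\B D_2)\z^{m-1}=\A(D_1\z)^{m-1}+\B(D_2\z)^{m-1}$. Second, I would use the determinant characterization recalled immediately above: ${\rm det}(\mathcal{C})=0$ if and only if $\mathcal{C}\z^{m-1}={\bf 0}$ for some $\z\neq{\bf 0}$. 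The key point is that $m$ is even, so $m-1$ is odd and $\B(D_2\z)^{m-1}=-\B(-D_2\z)^{m-1}$; therefore, under the substitution $\x=D_1\z$, $\y=-D_2\z$, the equation $(\A D_1+\B D_2)\z^{m-1}={\bf 0}$ is exactly $\A\x^{m-1}-\B\y^{m-1}={\bf 0}$, while $x_iy_i=-(D_1)_{ii}(D_2)_{ii}z_i^2\le 0$ automatically gives $\x*\y\le{\bf 0}$.

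For (i)$\Rightarrow$(ii) I would argue by contradiction. Suppose $\{\A,\B\}$ is a ${\bf P}$ tensor pair but ${\rm det}(\A D_1+\B D_2)=0$ for some admissible $D_1,D_2$. Choose $\z\neq{\bf 0}$ with $(\A D_1+\B D_2)\z^{m-1}={\bf 0}$ and set $\x=D_1\z$, $\y=-D_2\z$; the remarks above show $\x*\y\le{\bf 0}$ and $\A\x^{m-1}-\B\y^{m-1}={\bf 0}$, so the ${\bf P}$ pair property forces $(\x,\y)=({\bf 0},{\bf 0})$. Picking $i_0$ with $z_{i_0}\neq 0$, the hypothesis $\diag(D_1+D_2)>{\bf 0}$ gives $(D_1)_{i_0 i_0}>0$ or $(D_2)_{i_0 i_0}>0$, hence $x_{i_0}\neq 0$ or $y_{i_0}\neq 0$ --- a contradiction.

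For (ii)$\Rightarrow$(i) I would take $(\x,\y)$ with $\x*\y\le{\bf 0}$ and $\A\x^{m-1}-\B\y^{m-1}={\bf 0}$ and realize it through the dictionary. Set $z_i=x_i-y_i$, and define diagonal matrices $D_1,D_2$ by $(D_1)_{ii}=x_i/z_i$, $(D_2)_{ii}=-y_i/z_i$ when $z_i\neq 0$, and $(D_1)_{ii}=(D_2)_{ii}=1$ when $z_i=0$ (which, since $x_iy_i\le 0$, happens exactly when $x_i=y_i=0$). The condition $x_iy_i\le 0$ makes $x_i/z_i$ and $-y_i/z_i$ nonnegative with sum $1$, so $D_1,D_2$ are nonnegative and $\diag(D_1+D_2)>{\bf 0}$, while $D_1\z=\x$ and $D_2\z=-\y$. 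Then $(\A D_1+\B D_2)\z^{m-1}=\A\x^{m-1}-\B\y^{m-1}={\bf 0}$. If $(\x,\y)\neq({\bf 0},{\bf 0})$ then some $z_{i_0}=x_{i_0}-y_{i_0}$ has $|z_{i_0}|=|x_{i_0}|+|y_{i_0}|>0$, so $\z\neq{\bf 0}$, and the determinant characterization gives ${\rm det}(\A D_1+\B D_2)=0$, contradicting (ii). Hence $(\x,\y)=({\bf 0},{\bf 0})$ and $\{\A,\B\}$ is a ${\bf P}$ tensor pair.

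The only step needing a genuine idea --- and the main obstacle --- is the construction in (ii)$\Rightarrow$(i): producing a single vector $\z$ together with nonnegative diagonal scalings $D_1,D_2$ of strictly positive diagonal sum that simultaneously represent an arbitrary complementary pair as $(D_1\z,\,-D_2\z)$. The choice $z_i=x_i-y_i$ makes this transparent, and it is precisely the hypothesis $x_iy_i\le 0$ that keeps the diagonal entries nonnegative (in fact in $[0,1]$ with sum $1$). The remaining ingredients --- the tensor product identity, the sign flip coming from $m$ even, and the recalled determinant characterization --- are routine.
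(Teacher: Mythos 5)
Your proof is correct and follows essentially the same route as the paper: both directions hinge on the determinant characterization ${\rm det}(\mathcal{C})=0\iff \mathcal{C}\z^{m-1}={\bf 0}$ for some $\z\neq{\bf 0}$, the identity $(\A D)\z^{m-1}=\A(D\z)^{m-1}$, and the sign flip from $m-1$ being odd. The only difference is cosmetic: in (ii)$\Rightarrow$(i) the paper writes $\x=D_1\u$, $\y=-D_2\u$ with a sign vector $\u\in\{-1,0,1\}^n$ and $D_1,D_2$ built from $|x_i|,|y_i|$, while you take $\z=\x-\y$ with the normalized ratios $x_i/z_i$, $-y_i/z_i$; both yield the same decomposition.
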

\begin{proof}      (i)$\implies$(ii): Let $\{\A,\B\}$ be a ${\bf P}$ tensor pair. To prove item (ii), assume contrary that there exist $D_1,D_2\in\R$    nonnegative diagonal matrices with  diag($D_1+D_2)>{\bf 0}$ such that, 
$${\rm det}(\A D_1+\B D_2)= {0}.$$  Therefore, for some nonzero $\x\in\Rr$, we have $$(\A D_1+\B D_2)\x^{m-1}= {\bf 0}.$$ Let $d^{(i)}_j$ be the diagonal entry of $D_i$ for $i\in\{1,2\}$ and $j\in[n].$ Now From definition \ref{tp}, for $\x\in\Rr,$ \begin{equation*}
	\begin{aligned}
		(\A D_1\x^{m-1})_i=&\sum_{i_2,...,i_m=1}^{n}a_{i i_2...i_m} d^{(1)}_{i_2}...d^{(1)}_{i_m}x_{i_2}...x_{i_m},\\
		=&\sum_{ i_2,...,i_m=1}^{n}a_{ii_2...i_m}u_{i_2}...u_{i_m}\\
		=&(\A \u^{m-1})_i.
	\end{aligned}
\end{equation*}
Similarly, $\B D_2\x^{m-1}=-\B \v^{m-1}$, where $\u=D_1\x,\v=-D_2\x$. By rearranging terms, we get \begin{equation}	\begin{aligned}
		{\u}* {\v}&\leq{\bf 0},\\
		\A \u^{m-1}-\B \v^{m-1}&={\bf 0}.\\
	\end{aligned}
\end{equation}
Therefore by the ${\bf P}$ tensor pair property, $(\u,\v)=({\bf 0},{\bf 0}).$ But it contradicts our assumption. Hence we have our claim.\\
(ii)$\implies$(i): Assume contrary, there exists  nonzero $\z=(\x,\y)\in\Rr\times\Rr$, such that \begin{equation}
	\begin{aligned}
		{\x}* {\y}&\leq{\bf 0},\\
		\A \x^{m-1}-\B \y^{m-1}&={\bf 0},\\
	\end{aligned}
\end{equation}
With the help of ${\x}* {\y}\leq{\bf 0}$, we construct a vector $\u\in\Rr$, given as
$$u_i=\begin{cases}
	1 & \text{if} ~x_i>0\\ 	-1 & \text{if} ~x_i<0\\ 	-1 & \text{if} ~x_i=0\text{~and~} y_i> 0\\
	1 & \text{if} ~x_i=0\text{~and~} y_i< 0\\
	0 & \text{if} ~x_i=0\text{~and~} y_i= 0\\
\end{cases}.$$ Also we consider the diagonal matrices $D_1,D_2$ as
$$(D_1){_{ii}}=\begin{cases}
	\lvert x_i\rvert & \text{if} ~x_i\neq 0\\
	0 &\text{if} ~x_i=0\text{~and~} y_i\neq 0\\
	1       & \text{if} ~x_i=0 \text{~and~} y_i=0\\
\end{cases},~~(D_2)_{ii}=\lvert y_i\rvert,$$  where $(D_j)_{ii}$ is $ii^{th}$ entry of matrix $D_j$ for $j=1,2.$ We can see easily, $\x=D_1\u,\y=D_2(-\u)$ with $\diag(D_1+D_2)>{\bf 0}.$ As $\z=(\x,\y)$ is nonzero, $\u$ must be nonzero.
Thus $$\A(D_1{\bf u})^{m-1}-\B(D_2(-{\bf u}))^{m-1}={\bf 0},$$ As $m$ is even, $$\A D_1{\bf u}^{m-1}+\B D_2{\bf u}^{m-1}={\bf 0},$$ where $\diag(D_1+D_2)>{ \bf 0}$ and $
\u$ is a nonzero  vector. So we have, ${\rm det}(\A D_1+\B D_2)={0}. $ This contradicts the fact that, for arbitrary $n^{th}$ order nonnegative diagonal matrices $D_1,D_2$ with diag($D_1+D_2)>{\bf 0}$, $${\rm det}(\A D_1+\B D_2)\neq {0}.$$ Hence we have our claim.
\end{proof}

The above theorem is not applicable in the case of odd order tensor pairs, as demonstrated by the following example. Furthermore, the existence of an odd order ${\bf P}$ tensor pair is shown in the following example, despite the fact that there is no odd order $P$ tensor in TCP, see in \cite{you} .
\begin{example}\rm
Let $\A,\B\in T(3,2),$ with $a_{111}=a_{122}=1,b_{111}=b_{122}=-1$ and other $a_{i_1i_2i_3}=b_{i_1i_2i_3}=0.$ Let $\x,\y\in\Rr$ such that $\x*\y\leq {\bf 0},\A\xm-\B\ym={\bf 0}.$ By an easy calculation, $(\x,\y)=({\bf 0},{\bf 0}).$ Thus $\{\A,\B\}$ is a ${\bf P}$ tensor pair. Let $D_1,D_2$  be identity matrices of order $2$. By observation, $\diag(D_1+D_2)>{\bf 0}$ but ${\rm det}(\A D_1+\B D_2)={ 0}.$ Therefore above theorem does not hold for  odd order tensor.\end{example}
Next, we prove HTCP($\A,\B,{\bf q}$) has a nonempty compact solution set for every ${\bf q}\in\Rr$ if $\{\A,\B\}$  is a ${\bf P}$ tensor pair. 

To show this, first we prove the following result.
\begin{lemma}\label{nondeg}
Let ${\rm\Phi}(\x)=\A\x^{m-1}$. If \begin{itemize}
	\item [\rm(i)] $m$ is even.
	\item[\rm(ii)] ${\rm\Phi}(\x)={\bf 0} \iff \x={\bf 0}.$
\end{itemize} Then $\deg({\rm\Phi},{\bf 0})$ is nonzero.
\end{lemma}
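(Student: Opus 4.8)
The plan is to exploit the fact that, for even $m$, the map $\Phi$ is odd, and then invoke Borsuk's theorem (Theorem \ref{odd}). First I would record that $\Phi(\x)=\A\x^{m-1}$ is continuous, being a vector of homogeneous polynomials of degree $m-1$ in the coordinates of $\x$. Then I would carry out the one computation that matters: since $m$ is even, $m-1$ is odd, so for every $\x\in\Rr$,
$$\Phi(-\x)=\A(-\x)^{m-1}=(-1)^{m-1}\A\x^{m-1}=-\Phi(\x),$$
i.e.\ $\Phi$ is an odd mapping.

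Next, I would fix an arbitrary open ball $D=\{\x\in\Rr:\|\x\|<r\}$ centered at the origin; this is a symmetric bounded open set containing ${\bf 0}$. By hypothesis (ii), $\Phi(\x)={\bf 0}$ forces $\x={\bf 0}$, and since ${\bf 0}\notin\partial D$ we conclude ${\bf 0}\notin\Phi(\partial D)$. Hence $\deg(\Phi,D,{\bf 0})$ is well defined, and because $\Phi$ is odd on the symmetric set $D$, Theorem \ref{odd} gives that $\deg(\Phi,D,{\bf 0})$ is an odd integer, in particular nonzero.

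Finally, I would appeal to hypothesis (ii) once more together with the remark in the degree-theory subsection: since $\Phi$ is continuous on $\Rr$ with $\Phi(\x)={\bf 0}\iff\x={\bf 0}$, the integer $\deg(\Phi,\Delta,{\bf 0})$ is the same for every bounded open $\Delta$ containing ${\bf 0}$, and this common value is precisely $\deg(\Phi,{\bf 0})$. Taking $\Delta=D$ shows $\deg(\Phi,{\bf 0})$ is odd, hence nonzero, which is the assertion. I do not expect any real obstacle here; the only thing to be careful about is the parity identity $(-1)^{m-1}=-1$ for even $m$ (this is exactly where the even-order hypothesis is used), after which Borsuk's theorem does all the work; one should also state explicitly that the ball's boundary avoids the origin so that the degree is defined.
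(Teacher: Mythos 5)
Your proposal is correct and follows essentially the same route as the paper: observe that $m-1$ is odd so $\Phi$ is an odd map, pick a symmetric bounded open neighborhood of the origin, use hypothesis (ii) to ensure ${\bf 0}\notin\Phi(\partial D)$ and to identify $\deg(\Phi,D,{\bf 0})$ with the local degree $\deg(\Phi,{\bf 0})$, and invoke Borsuk's theorem to conclude the degree is odd, hence nonzero. Your write-up is in fact slightly more explicit than the paper's about why ${\bf 0}\notin\Phi(\partial D)$ and about the parity computation, but there is no substantive difference.
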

\begin{proof}      
Let ${\rm\Phi}(\x)=\A\x^{m-1}$. Assume $D$ is a symmetric open bounded subset in $\Rr$ which contains ${\bf 0}$. As ${\rm\Phi}(\x)={\bf 0} \iff \x={\bf 0}$,  we see that ${\bf 0}\notin {\rm\Phi}(\partial D)$ and $$\deg({\rm\Phi},D,{\bf 0})=\deg({\rm\Phi},{\bf 0}).$$ Due to the fact $m$ is even, ${\rm\Phi}$ is an odd function. Therefore  by theorem \ref{odd}, $\deg({\rm\Phi},D,{\bf 0})=\deg({\rm\Phi},{\bf 0})\neq 0.$ 
\end{proof}

\begin{theorem}\label{exe}
Let $\A,\B\in E(m,n)$. If $\{\A,\B\}$ is a ${\bf P}$ tensor pair, {\rm HTCP($\A,\B,{\bf q}$)} has a nonempty compact solution set for every ${\bf q}\in\Rr.$
\end{theorem}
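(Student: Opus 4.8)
The plan is to verify the two hypotheses of Theorem~\ref{Rrr} for $\{\A,\B\}$: that it is an ${\bf R_0}$ tensor pair and that $\deg(\A,\B)\neq 0$; the conclusion of Theorem~\ref{Rrr} is then exactly the assertion. The first hypothesis is immediate: if $(\x,\y)$ solves $\mathrm{HTCP}(\A,\B,{\bf 0})$, then $\x\wedge\y={\bf 0}$ gives $\x,\y\geq{\bf 0}$ and $\x*\y={\bf 0}$ by Proposition~\ref{star}(III), so in particular $\x*\y\leq{\bf 0}$ and $\A\xm-\B\ym={\bf 0}$, whence $(\x,\y)=({\bf 0},{\bf 0})$ by the definition of a ${\bf P}$ tensor pair. (By Theorem~\ref{bdd} this already makes $\S$ compact for every $\q$.)

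The substance is to show $\deg(\A,\B)=\deg({\rm\Psi},{\bf 0})\neq 0$, where ${\rm\Psi}$ is the map in~(\ref{eq3}). Using the componentwise identity $\x\wedge\y=\tfrac12(\x+\y)-\tfrac12|\x-\y|$ (with $|\cdot|$ the componentwise absolute value), I would deform the min block toward its linear part via the homotopy
\[
G(\z,t)=\begin{bmatrix}\tfrac12(\x+\y)-\tfrac{t}{2}|\x-\y|\\ \A\xm-\B\ym\end{bmatrix},\qquad \z=(\x,\y)\in\Rr\times\Rr,\ \ t\in[0,1].
\]
Then $G(\cdot,1)={\rm\Psi}$, and $G(\cdot,0)(\z)=\bigl(\tfrac12(\x+\y),\,\A\xm-\B\ym\bigr)$ is an \emph{odd} map on $\Rr\times\Rr$, because $m$ being even forces $\A(-\x)^{m-1}=-\A\xm$ and $\B(-\y)^{m-1}=-\B\ym$.

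The crucial step, which I expect to be the heart of the argument, is that the zero set $Z=\{\z:G(\z,t)={\bf 0}\text{ for some }t\in[0,1]\}$ is just $\{({\bf 0},{\bf 0})\}$. Indeed, if $G(\z,t)={\bf 0}$, the first block gives $x_i+y_i=t|x_i-y_i|\geq 0$ for each $i$; squaring and using $t^2\leq 1$ gives $(x_i+y_i)^2\leq(x_i-y_i)^2$, i.e.\ $x_iy_i\leq 0$, so $\x*\y\leq{\bf 0}$. Combined with $\A\xm-\B\ym={\bf 0}$ from the second block, the ${\bf P}$ tensor pair property forces $(\x,\y)=({\bf 0},{\bf 0})$. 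This covers $t\in[0,1]$ uniformly, in particular the degenerate endpoints $t=0$ (first block $\x+\y={\bf 0}$) and $t=1$ (first block the min map).

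Since $Z=\{({\bf 0},{\bf 0})\}$ is bounded, I would take $\Delta$ a symmetric open ball about the origin in $\Rr\times\Rr$ and apply homotopy invariance (D2): $\deg(\A,\B)=\deg(G(\cdot,1),\Delta,{\bf 0})=\deg(G(\cdot,0),\Delta,{\bf 0})$. As $G(\cdot,0)$ is odd and vanishes only at the origin, so that ${\bf 0}\notin G(\cdot,0)(\partial\Delta)$, Borsuk's theorem (Theorem~\ref{odd}) shows $\deg(G(\cdot,0),\Delta,{\bf 0})$ is an odd integer, hence nonzero. Therefore $\deg(\A,\B)\neq 0$, and Theorem~\ref{Rrr} gives that $\S$ is a nonempty compact set for every $\q\in\Rr$. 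Evenness of $m$ is used only to make $G(\cdot,0)$ odd; everything else rests solely on the ${\bf P}$ tensor pair hypothesis.
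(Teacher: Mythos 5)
Your proof is correct, and it takes a genuinely different route to the key fact $\deg(\A,\B)\neq 0$. The paper also reduces to Theorem~\ref{Rrr}, but its homotopy is $F(\z,t)=t(\y,\A\xm)+(1-t)(\x\wedge\y,\A\xm-\B\ym)$, deforming ${\rm\Psi}$ to the \emph{product} map $(\x,\y)\mapsto(\y,\A\xm)$; the zero set is controlled by a rescaling trick (writing $(1+\alpha)\A\xm=\A(\alpha'\x)^{m-1}$ with $\alpha'=(1+\alpha)^{1/(m-1)}$ and invoking the ${\bf P}$ pair property), and then the degree is factored via the Cartesian product property (D3) as $\deg(I)\cdot\deg({\rm\Phi})$ with ${\rm\Phi}(\x)=\A\xm$, with Borsuk's theorem applied to the single odd map ${\rm\Phi}$ (their Lemma~\ref{nondeg}). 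You instead exploit the identity $\x\wedge\y=\tfrac12(\x+\y)-\tfrac12|\x-\y|$ to deform only the min block, keep $\B$ in the picture throughout, and land on the odd map $\bigl(\tfrac12(\x+\y),\,\A\xm-\B\ym\bigr)$ on $\Rr\times\Rr$, to which Borsuk applies directly; your squaring argument $x_i+y_i=t|x_i-y_i|\Rightarrow x_iy_i\leq 0$ handles all $t\in[0,1]$ in one stroke and is cleaner than the paper's case split. What the paper's route buys is the sharper identification $\deg(\A,\B)=\deg({\rm\Phi},{\bf 0})$, i.e.\ the HTCP-degree equals the local degree of $\x\mapsto\A\xm$ at the origin, which is more information than mere nonvanishing; what yours buys is economy (no Cartesian product property, no auxiliary lemma) and a homotopy whose zero-set analysis uses the ${\bf P}$ pair hypothesis in exactly the form it is stated. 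Both use evenness of $m$ only to make the terminal map odd, so the two arguments are equally sharp in their hypotheses.
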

\begin{proof}       To prove this, we use degree theory tools and homotopy function.
Let us consider a homotopy, for $t\in[{0},1]$, $$F(\z,t)=t\begin{bmatrix}
	\y\\ \A \x^{m-1}
\end{bmatrix}+(1-t)\begin{bmatrix}
	\x\wedge \y\\ \A \x^{m-1}-\B\y^{m-1}
\end{bmatrix},$$where $\z=(\x,\y)\in\Rr\times\Rr$. At $t=0$ and $t=1$, $$F(\z,{0})={\rm\Psi}(\z)~ \text{and}~F(\z,1)=\begin{bmatrix}
	\y\\ \A \x^{m-1}
\end{bmatrix}.$$ To prove $ \deg(\A,\B)$ is nonzero, first we show the set, $Z=\{\z:F(\z,t)={\bf 0},~\text{where}~t\in[{0},1]\}$, is bounded. To prove our claim, we discuss the following cases here.\\ {\it Case: 1.} When $t={0}, $ we get ${\rm\Psi}(\z)={\bf 0}\implies\z={\bf 0}$ as $\{\A,\B\}$ is a ${\bf P}$ tensor pair. For $t=1, F(\z,1) ={\bf 0} \iff \z={\bf 0}$.\\ {\it Case: 2}. Now for $t\in({0},1)$, $$F(\z,t)={\bf 0},$$ $$\implies\begin{bmatrix}
	\x\wedge \y\\ \A \x^{m-1}-\B\y^{m-1}
\end{bmatrix}=-\alpha\begin{bmatrix}
	\y\\ \A \x^{m-1}
\end{bmatrix},$$ where $\alpha=\dfrac{t}{1-t}>{ 0},$ as $t\in({0},1).$	From the first row of above equation and proposition \ref{star}, we have $$\x\wedge  \y=-\alpha {\y}\implies \text{min}\{\x+\alpha {\y} ,(1+\alpha){\y}\}={\bf 0}.$$
From this, we get $\y \geq {\bf 0}$ and $({\x}+\alpha {\y}) * (1+\alpha){\y} ={\bf 0} $ which implies that ${\x} * {\y} \leq {\bf 0}.$ From the second row, \begin{equation*}
	\begin{aligned}
		\A \x^{m-1}-\B\y^{m-1}&=-\alpha \A \x^{m-1},\\ (1+\alpha)\A \x^{m-1}-\B\y^{m-1}&={\bf 0},\\
		\A (\alpha'\x)^{m-1}-\B\y^{m-1}&={\bf 0},	
	\end{aligned}
\end{equation*}
where $\alpha'=(1+\alpha)^{\frac{1}{m-1}}.$ Here $m$ is even so from the above equations, $$(\alpha'\x)*\y\leq {\bf 0},~	\A (\alpha'\x)^{m-1}-\B\y^{m-1}={\bf 0}.$$ As  $\{\A,\B\}$ is a ${\bf P}$ tensor pair, $((\alpha'\x),\y)=({\bf 0},{\bf 0})\implies \z=(\x,\y)=({\bf 0},{\bf 0}).$\\
Hence, $Z$ contains only the zero vector. By the homotopy invariance property of degree (D2), $$\deg(\A,\B)=\deg\big(F(\cdot,{ 0}),\Delta,{\bf 0}\big)=\deg\big(F(\cdot,1),\Delta,{\bf 0}\big).$$ Let ${\rm\Phi}(\x)=\A\xm$. Now for two arbitrary open bounded sets $\Delta_1$ and $\Delta_2$ containing ${\bf 0}$ in $\Rr$ such that $\Delta=\Delta_1\times\Delta_2$, by the Cartesian product (D3) and  (D1), we have, $$\deg(\A,\B)=\deg\big(F(\cdot,1),\Delta,{\bf 0}\big)=\deg\big({I},\Delta_1,{\bf 0}\big)\deg\big({\rm\Phi},\Delta_2,{\bf 0}\big).$$ As $\{\A,\B\}$ is ${\bf P} $ tensor pair, $\A\xm={\bf 0}\iff \x={\bf 0}.$ Thus  by lemma \ref{nondeg}, $$\deg(\A,\B)=\deg\big({\rm\Phi},\Delta_2,{\bf 0})\neq 0.$$ As ${\bf P}$ tensor pair is an ${\bf R}_0$ tensor pair,  $\{\A,\B\} $ is an ${\bf R}_0$ tensor pair with $\deg(\A,\B)\neq 0.$ Due to theorem \ref{Rrr}, $\S$ is nonempty compact  for every $\q$ in $\Rr$.
\end{proof}

The obvious follow-up question is \textquotedblleft what happens if $m$ is odd?" Does  ${\bf P}$ tensor pair  give  the existence of the HTCP's solution? We provide this answer negatively by the following example.
\begin{example}\rm
Let $\A,\B\in T(3,2).$ For $\{\A,\B\}=\{\I,-\I\}$ and $\x,\y\in\Rr$ such that 
\begin{equation*}
	\begin{aligned}
		\x*\y\leq &{\bf 0}\\ x^2_1+y^2_1=&{0},\\ x^2_2+y^2_2=&{ 0}.\\
	\end{aligned}
\end{equation*}
By above equations, $(\x,\y)=({\bf 0},{\bf 0}).$ Thus $\{\A,\B\}$ is a ${\bf P}$ tensor pair. Now let us take ${\q}=(0,-1)^t.$ As we we can see easily there does not exist any $(\x,\y)$ such that $$\x\wedge\y={\bf 0},~\A\xm-\B\ym=\q.$$ Therefore, in the  odd order  case ${\bf P}$ tensor pair does not  give existence of solution.
\end{example}
In HLCP, the $P$ pair ensures the uniqueness of the solution. However, one may wonder what happens in the HTCP case. Through simple observation, we can see that in even order cases, $\B$ is a $P$ tensor if and only if $\{\mathcal{I},\B\}$ is a ${\bf P}$ tensor pair. It is well known that TCP may not provide a unique solution when the given tensor is a $P$ tensor, as demonstrated in Example 3.2 in \cite{GUST}. Therefore, we can conclude that HTCP$(\I,\B,\q)$ may not have a unique solution when $\{\mathcal{I},\B\}$ is a ${\bf P}$ tensor pair. Thus, the ${\bf P}$ tensor pair does not necessarily ensure the uniqueness of the solution to the corresponding HTCP.
\subsection{Strong P tensor pair}
In TCP, global uniqueness solvable property is given by strong $P$ tensor see in \cite{GUST}. Motivated by the strong $P$ tensor definition, we define a strong ${\bf P}$ tensor pair for HTCP.
\begin{definition}\rm
	We say $\{\A,\B\}$ is strong ${\bf P}$ tensor pair if for $\x_i,\y_i \in\Rr,i\in\{1,2\}$,\begin{equation*}\begin{aligned}
			[(\x_1-\x_2)*(\y_1-\y_2)\leq {\bf 0},~(\A\x_1^{m-1}-\A\x_2^{m-1})-(\B\y_1^{m-1}-\B\y_2^{m-1})={\bf 0}\\  ~~\implies\x_1=\x_2,\y_1=\y_2].
		\end{aligned}
	\end{equation*} 
\end{definition}
\begin{prop}\label{stp}
	Let $\{\A,\B\}$ be a strong ${\bf P}$ tensor pair. Then the followings hold:
	\begin{itemize}
		\item [\rm(i)] $\A$ and $\B$ are even order tensors.
		\item[\rm(ii)] $F(\x)=\A\xm$ and $G(\y)=\B\ym$ are injective functions.
		\item [\rm(iii)] $\deg(\A,\B)$ is non-zero.
	\end{itemize}
\end{prop}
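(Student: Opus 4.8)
The plan is to extract each of the three conclusions from the defining implication of a strong ${\bf P}$ tensor pair by making judicious choices of the four vectors $\x_1,\x_2,\y_1,\y_2$. For item (i), I would argue by contradiction: suppose $\A$ (say) is an odd order tensor. Then $\A(-\x)^{m-1}=-\A\xm$ fails — wait, for odd $m$ we have $\A(-\x)^{m-1}=\A\x^{m-1}$, so pick any nonzero $\x$ and set $\x_1=\x$, $\x_2=-\x$, and $\y_1=\y_2={\bf 0}$. Then $(\x_1-\x_2)*(\y_1-\y_2)=(2\x)*{\bf 0}={\bf 0}\leq{\bf 0}$, and $\A\x_1^{m-1}-\A\x_2^{m-1}=\A\x^{m-1}-\A\x^{m-1}={\bf 0}$ while $\B\y_1^{m-1}-\B\y_2^{m-1}={\bf 0}$; the hypothesis forces $\x_1=\x_2$, i.e.\ $\x=-\x$, contradicting $\x\neq{\bf 0}$. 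The same device applied with the roles of $\A$ and $\B$ swapped (taking $\x_1=\x_2={\bf 0}$ and $\y_1=\y$, $\y_2=-\y$) shows $\B$ must be even order as well. A subtle point worth checking is the case $m=2$, which is vacuously fine since even; and one should note that $m$ could in principle be odd with $m-1$ even — but then $\A(-\x)^{m-1}=\A\x^{m-1}$ still holds, so the same argument goes through uniformly.

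For item (ii), to show $F(\x)=\A\xm$ is injective, take $\x_1,\x_2$ with $F(\x_1)=F(\x_2)$ and set $\y_1=\y_2={\bf 0}$. Then $(\x_1-\x_2)*(\y_1-\y_2)={\bf 0}\leq{\bf 0}$ and $(\A\x_1^{m-1}-\A\x_2^{m-1})-(\B\y_1^{m-1}-\B\y_2^{m-1})={\bf 0}-{\bf 0}={\bf 0}$, so the strong ${\bf P}$ property gives $\x_1=\x_2$. Symmetrically, fixing $\x_1=\x_2$ and letting $\y_1,\y_2$ vary with $G(\y_1)=G(\y_2)$ yields $\y_1=\y_2$, so $G(\y)=\B\ym$ is injective. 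In particular $F$ and $G$ each vanish only at ${\bf 0}$, a fact I will reuse in item (iii).

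For item (iii), the natural route is to show $\{\A,\B\}$ is a ${\bf P}$ tensor pair and then invoke Theorem \ref{exe} (or rather the degree computation inside its proof). Indeed, if $\x*\y\leq{\bf 0}$ and $\A\xm-\B\ym={\bf 0}$, apply the strong ${\bf P}$ definition with $\x_1=\x,\x_2={\bf 0},\y_1=\y,\y_2={\bf 0}$: then $(\x-{\bf 0})*(\y-{\bf 0})=\x*\y\leq{\bf 0}$ and $(\A\xm-{\bf 0})-(\B\ym-{\bf 0})={\bf 0}$, forcing $\x={\bf 0},\y={\bf 0}$. So $\{\A,\B\}$ is a ${\bf P}$ tensor pair, and by item (i) it is an even order one; the proof of Theorem \ref{exe} then shows precisely that $\deg(\A,\B)=\deg({\rm\Phi},\Delta_2,{\bf 0})\neq 0$ where ${\rm\Phi}(\x)=\A\xm$, using Lemma \ref{nondeg} and the fact that $\A\xm={\bf 0}\iff\x={\bf 0}$ (which also follows from item (ii)).

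The only place requiring genuine care — the ``hard part'' — is item (i): one must be sure that choosing $\x_2=-\x_1$ really makes the tensor term vanish, which hinges on the parity of $m-1$, and one must handle the possibility that only one of $\A$, $\B$ is odd order (the definition a priori allows $\A\in T(m,n)$ and $\B\in T(m,n)$ with the \emph{same} $m$, so in fact a single parity governs both, and the argument above closes the case). Everything else is a direct and short substitution into the hypothesis, and item (iii) is essentially a pointer to the already-established Theorem \ref{exe}.
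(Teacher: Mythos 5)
Your proposal is correct and follows essentially the same route as the paper: item (i) via the substitution $\x_1=\x$, $\x_2=-\x$, $\y_1=\y_2={\bf 0}$ and the parity of $m-1$; item (ii) by setting the other pair of vectors to ${\bf 0}$; and item (iii) by observing that a strong ${\bf P}$ pair is a ${\bf P}$ pair and invoking Theorem \ref{exe}. The only difference is that you spell out the reduction from strong ${\bf P}$ to ${\bf P}$ (taking $\x_2=\y_2={\bf 0}$), which the paper states without proof.
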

\begin{proof}      (i):
Let $\x_1=\x,\x_2=-\x$ and $\y_i={\bf 0},$ for $i=1,2$. Assume contrary $m$ is odd so $\A\x^{m-1}=\A(-{\bf x})^{m-1}$. Then $$(\x_1-\x_2)*(\y_1-\y_2)= {\bf 0},~(\A\x_1^{m-1}-\A\x_2^{m-1})-(\B\y_1^{m-1}-\B\y_2^{m-1})={\bf 0}.$$ Therefore $\x_1=\x_2.$ But it contradicts our assumption i.e., $\x_1=\x,\x_2=-\x$. Hence $m$ is even. \\
(ii): Let $F(\x_1)=F(\x_2)$ and $\y_1={\bf 0}=\y_2$ then we get $$(\x_1-\x_2)*(\y_1-\y_2)= {\bf 0},~(\A\x_1^{m-1}-\A\x_2^{m-1})-(\B\y_1^{m-1}-\B\y_2^{m-1})={\bf 0}.$$ As $\{\A,\B\}$ be a strong ${\bf P}$ tensor pair, $\x_1=\x_2.$ Hence $F$ is an injective function. Similarly, $G$ is also an injective function.\\
(iii): As  $\{\A,\B\}$ is a strong ${\bf P}$ tensor pair,  $\{\A,\B\}$ is a ${\bf P}$ tensor pair. Therefore from Theorem \ref{exe},  $\deg(\A,\B)\neq 0.$
\end{proof}

Next, we prove an equivalence relation for strong $P$ tensor and strong ${\bf P}$ tensor pair.
\begin{prop} 
	Let $\B\in T(m,n)$. Then $\B$ is a strong $ P$ tensor if and only if $\{\mathcal{I},\B\}$ is a strong ${\bf P}$ tensor pair.
	
\end{prop}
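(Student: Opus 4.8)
The whole argument rests on one scalar observation together with evenness of the order, so I would open by pinning that down. First note that a strong ${\bf P}$ tensor pair is automatically of even order --- this is Proposition \ref{stp}(i) --- and the same reasoning shows a strong $P$ tensor is even order as well (for odd $m$, applying the strong $P$ condition with $\u=\x$ and $\v=-\x$, and using $\B(-\x)^{m-1}=\B\x^{m-1}$, would force $\x=-\x$ for every $\x$, a contradiction). Hence in both directions of the equivalence we may assume $m$ is even, so $m-1$ is odd and $t\mapsto t^{m-1}$ is a strictly increasing bijection of $\mathbb{R}$. Consequently, for real $a,b$ the differences $a-b$ and $a^{m-1}-b^{m-1}$ have the same sign, so $c(a-b)\leq 0\iff c(a^{m-1}-b^{m-1})\leq 0$ for every $c\in\mathbb{R}$; componentwise this reads $\w*(\x_1-\x_2)\leq{\bf 0}\iff \w*(\x_1^{[m-1]}-\x_2^{[m-1]})\leq{\bf 0}$ for all $\w,\x_1,\x_2\in\Rr$, and moreover $\x\mapsto\x^{[m-1]}=\mathcal{I}\xm$ is a bijection of $\Rr$ with two-sided inverse $\w\mapsto\w^{[\frac{1}{m-1}]}$. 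I will use the strong $P$ tensor condition of \cite{GUST} in the equivalent form: $(\u-\v)*(\B\u^{m-1}-\B\v^{m-1})\leq{\bf 0}$ implies $\u=\v$.

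For the implication $(\Leftarrow)$, assume $\{\mathcal{I},\B\}$ is a strong ${\bf P}$ tensor pair, so $m$ is even. Given $\y_1,\y_2\in\Rr$ with $(\y_1-\y_2)*(\B\y_1^{m-1}-\B\y_2^{m-1})\leq{\bf 0}$, set $\x_i:=(\B\y_i^{m-1})^{[\frac{1}{m-1}]}$, so $\mathcal{I}\x_i^{m-1}=\x_i^{[m-1]}=\B\y_i^{m-1}$ for $i=1,2$. Then $(\mathcal{I}\x_1^{m-1}-\mathcal{I}\x_2^{m-1})-(\B\y_1^{m-1}-\B\y_2^{m-1})={\bf 0}$, and rewriting the hypothesis as $(\y_1-\y_2)*(\x_1^{[m-1]}-\x_2^{[m-1]})\leq{\bf 0}$ and applying the scalar equivalence above converts it to $(\x_1-\x_2)*(\y_1-\y_2)\leq{\bf 0}$. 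The strong ${\bf P}$ tensor pair property then gives $\x_1=\x_2$ and $\y_1=\y_2$; in particular $\y_1=\y_2$, so $\B$ is a strong $P$ tensor.

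For the implication $(\Rightarrow)$, assume $\B$ is a strong $P$ tensor, so $m$ is even. Let $\x_1,\x_2,\y_1,\y_2\in\Rr$ satisfy $(\x_1-\x_2)*(\y_1-\y_2)\leq{\bf 0}$ and $(\mathcal{I}\x_1^{m-1}-\mathcal{I}\x_2^{m-1})-(\B\y_1^{m-1}-\B\y_2^{m-1})={\bf 0}$, that is, $\x_1^{[m-1]}-\x_2^{[m-1]}=\B\y_1^{m-1}-\B\y_2^{m-1}$. Applying the scalar equivalence to the first relation gives $(\x_1^{[m-1]}-\x_2^{[m-1]})*(\y_1-\y_2)\leq{\bf 0}$, that is, $(\B\y_1^{m-1}-\B\y_2^{m-1})*(\y_1-\y_2)\leq{\bf 0}$; the strong $P$ property of $\B$ forces $\y_1=\y_2$, whence $\x_1^{[m-1]}=\x_2^{[m-1]}$ and injectivity of $t\mapsto t^{m-1}$ gives $\x_1=\x_2$. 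Hence $\{\mathcal{I},\B\}$ is a strong ${\bf P}$ tensor pair.

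The computation is short; the only genuinely content-bearing step is the sign-matching reduction, which is precisely where evenness of $m$ is used, and the one thing needing a word of care is ruling out a vacuous odd-order reading of the biconditional --- handled at the outset by noting that both sides force $m$ even. The remaining ingredients (that $\w\mapsto\w^{[1/(m-1)]}$ is a two-sided inverse of $\x\mapsto\x^{[m-1]}$, and the componentwise bookkeeping with the Hadamard product) are routine.
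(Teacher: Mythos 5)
Your proof is correct and follows essentially the same route as the paper's: use evenness of $m$ (so $t\mapsto t^{m-1}$ is increasing) to trade $(\x_1-\x_2)*(\y_1-\y_2)\leq{\bf 0}$ for $(\x_1^{[m-1]}-\x_2^{[m-1]})*(\y_1-\y_2)\leq{\bf 0}$ and reduce to the strong $P$ condition on $\B$. The only difference is that you spell out the converse (via $\x_i=(\B\y_i^{m-1})^{[\frac{1}{m-1}]}$) and the even-order point explicitly, where the paper writes ``converse follows similarly.''
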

\begin{proof}      
Let $\x_i,\y_i \in\Rr,i\in\{1,2\}$ such that $$(\x_1-\x_2)*(\y_1-\y_2)\leq {\bf 0},~({\mathcal{I}}\x_1^{m-1}-\mathcal{I}\x_2^{m-1})-(\B\y_1^{m-1}-\B\y_2^{m-1})={\bf 0}.$$ As we know $m$ is even, $(\x_1-\x_2)*(\y_1-\y_2)\leq {\bf 0}\implies (\x^{[m-1]}_1-\x^{[m-1]}_2)*(\y_1-\y_2)\leq {\bf 0}.$ Also, $\x_1^{[m-1]}-\x_2^{[m-1]}=(\B\y_1^{m-1}-\B\y_2^{m-1})$. Thus $(\y_1-\y_2)*(\B\y_1^{m-1}-\B\y_2^{m-1})\leq{\bf 0}$. As $\B$ is strong $P$ tensor, $\y_1=\y_2$. This gives $\x_1=\x_2.$ Hence we have our claim. Converse follows similarly.
\end{proof}

\begin{theorem}
	Let $\{\A,\B\}$ be a strong ${\bf P}$ tensor pair then  {\rm HTCP$(\A,\B,{\bf q})$} has the unique solution for any $\q\in\Rr$.
\end{theorem}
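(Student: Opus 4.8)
The plan is to handle existence and uniqueness separately: existence will be inherited from the theory already developed for ${\bf P}$ tensor pairs, while uniqueness will come directly from the defining implication of a strong ${\bf P}$ tensor pair, fed by an elementary sign computation on the two complementarity conditions.

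\emph{Existence.} First note that a strong ${\bf P}$ tensor pair is in particular a ${\bf P}$ tensor pair: substituting $\x_1=\x$, $\x_2={\bf 0}$, $\y_1=\y$, $\y_2={\bf 0}$ into the defining implication of a strong ${\bf P}$ tensor pair recovers exactly $[\x*\y\leq{\bf 0},~\A\xm-\B\ym={\bf 0}]\implies(\x,\y)=({\bf 0},{\bf 0})$. By Proposition \ref{stp}(i), $m$ is even, so $\A,\B\in E(m,n)$, and hence Theorem \ref{exe} applies: HTCP$(\A,\B,\q)$ has a nonempty compact solution set for every $\q\in\Rr$. (Equivalently, one may quote $\deg(\A,\B)\neq 0$ from Proposition \ref{stp}(iii) together with Theorem \ref{Rrr}.)

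\emph{Uniqueness.} Let $(\x_1,\y_1)$ and $(\x_2,\y_2)$ both solve HTCP$(\A,\B,\q)$. Subtracting the two equality constraints $\A\x_1^{m-1}-\B\y_1^{m-1}=\q$ and $\A\x_2^{m-1}-\B\y_2^{m-1}=\q$ gives $(\A\x_1^{m-1}-\A\x_2^{m-1})-(\B\y_1^{m-1}-\B\y_2^{m-1})={\bf 0}$, which is the second hypothesis in the definition of a strong ${\bf P}$ tensor pair. It remains to verify $(\x_1-\x_2)*(\y_1-\y_2)\leq{\bf 0}$. Fix an index $i$. By Proposition \ref{star}(III), $\x_j\wedge\y_j={\bf 0}$ is equivalent to $\x_j,\y_j\geq{\bf 0}$ and $\x_j*\y_j={\bf 0}$, so $x_{j,i}\geq 0$, $y_{j,i}\geq 0$ and $x_{j,i}y_{j,i}=0$ for $j=1,2$. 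Expanding the product,
\begin{equation*}
	(x_{1,i}-x_{2,i})(y_{1,i}-y_{2,i}) = x_{1,i}y_{1,i}+x_{2,i}y_{2,i}-x_{1,i}y_{2,i}-x_{2,i}y_{1,i} = -\big(x_{1,i}y_{2,i}+x_{2,i}y_{1,i}\big)\leq 0,
\end{equation*}
since $x_{1,i}y_{1,i}=x_{2,i}y_{2,i}=0$ and the two remaining products are nonnegative. As $i$ was arbitrary, $(\x_1-\x_2)*(\y_1-\y_2)\leq{\bf 0}$. The defining property of a strong ${\bf P}$ tensor pair now forces $\x_1=\x_2$ and $\y_1=\y_2$.

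Combining the two parts shows $\S$ is a singleton for every $\q\in\Rr$. The only genuinely new step is the sign computation above; everything else is a direct appeal to Theorem \ref{exe} (via the observation that a strong ${\bf P}$ tensor pair is a ${\bf P}$ tensor pair and, by Proposition \ref{stp}(i), of even order) and to Proposition \ref{stp}. I do not anticipate any real obstacle here: no degree theory is needed for the uniqueness half, and the existence half is already packaged in Theorem \ref{exe}.
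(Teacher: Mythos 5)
Your proof is correct and follows essentially the same route as the paper: existence via the fact that a strong ${\bf P}$ tensor pair is a ${\bf P}$ tensor pair of even order (Proposition \ref{stp} and Theorem \ref{exe}), and uniqueness by feeding the difference of two solutions into the defining implication of a strong ${\bf P}$ tensor pair. The only difference is that you spell out the componentwise sign computation showing $(\x_1-\x_2)*(\y_1-\y_2)\leq{\bf 0}$, which the paper asserts without detail.
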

\begin{proof}       The existence of solution comes from item (iii) of proposition \ref{stp}. For uniqueness, let there exist  $\x_i,\y_i \in\Rr,i\in\{1,2\}$ and $\q\in\Rr$ such that 	\begin{equation}\begin{aligned}
		{\x_1}\wedge { \y_1}&={\bf 0},\A\xm_1-\B\ym_1={\bf q}.\\
		{\x_2}\wedge { \y_2}&={\bf 0},\A\xm_2-\B\ym_2={\bf q.}\end{aligned}\end{equation} Above system, we can write as 
\begin{equation}\begin{aligned} (\x_1-\x_2)*(\y_1-\y_2)\leq &{\bf 0},\\ (\A\x_1^{m-1}-\A\x_2^{m-1})-(\B\y_1^{m-1}-\B\y_2^{m-1})=&{\bf 0}.\end{aligned}
\end{equation}
As $\{\A,\B\}$ are strong ${\bf P}$ tensor pair, $\x_1=\x_2$ and $\y_1=\y_2.$ Hence proved.\end{proof}
\section{Conclusion}
 We introduced the HTCP in this paper, followed by a study of its boundedness and the existence of the solution. In order to prove the boundedness and existence of the solution to the HTCP, we introduced the concepts of ${\bf R_0}$,${\bf R}$ and ${\bf P}$ tensor pair. We offer an equivalent characterization of the ${\bf P}$ tensor pair with  one of the main existence result. Lastly, we have given a sufficient condition for the uniqueness of the solution for HTCP with the help of a strong ${\bf  P}$ tensor pair.
\section*{Declaration of Competing Interest} The authors have no competing interests.
\section*{Acknowledgements} The first author is a CSIR-SRF fellow. He wants to thank the Council of Science \& Industrial Research(CSIR) for the financial support.

\end{document}